\documentclass{article}

\usepackage{amsmath, amsthm, amssymb, mathtools, hyperref}
\usepackage{graphicx}
\usepackage{comment}
\usepackage{bbm}
\usepackage{enumerate}
\usepackage[all]{xy}
\usepackage[font=small,labelfont=bf]{caption}
\usepackage{tikz}
\usepackage{color}
\usepackage{booktabs}
\usepackage{algorithm,algpseudocode}

\newtheorem{theorem}{Theorem}
\numberwithin{theorem}{section}
\newtheorem{proposition}[theorem]{Proposition}
\newtheorem{lemma}[theorem]{Lemma}
\newtheorem{corollary}[theorem]{Corollary}
\newtheorem{definition}[theorem]{Definition}
\newtheorem{remark}[theorem]{Remark}
\newtheorem{conjecture}[theorem]{Conjecture}

\newtheorem{example}[theorem]{Example}

\newcommand{\R}{\mathbb{R}}

\newcommand{\Z}{\mathbb{Z}}

\DeclareMathOperator{\ord}{ord}

\newcommand{\tabincell}[2]{\begin{tabular}{@{}#1@{}}#2\end{tabular}}

\begin{document}
	
\title{Computing Linear Systems on Metric Graphs}
\date{}
\author{Bo Lin}
\maketitle

\begin{abstract}
	The linear system $|D|$ of a divisor $D$ on a metric graph has the structure of a cell complex. We introduce the anchor divisors and anchor cells in it - they serve as the landmarks for us to compute the f-vector of the complex and find all cells in the complex. A linear system can also be identified as a tropical convex hull of rational functions. We compute its extremal generators using the landmarks. We apply these methods to some examples - namely the canonical linear systems of some small trivalent graphs.
\end{abstract}

\section{Introduction}\label{sec:int}

In algebraic geometry, the linear systems of divisors on curves are well studied (cf. \cite[\textsection 3]{GH}). Other authors studied linear systems of divisors on metric graphs, for example \cite{Ba08,BF,HMY}. Baker and Norine \cite{BN} proved a graph-theoretic analogue of the Riemann-Roch Theorem, and it was generalized to tropical curves (which may contain unbounded rays) independently by Gathmann and Kerber \cite{GK} and by Mikhalkin and Zharkov \cite{MZ}. The theory of linear systems on metric graphs is applied to algebraic geometry, for example in \cite{BPR}. 

Haase, Musiker and Yu \cite{HMY} studied the cell complex structure of $|D|$ and the tropical semi-module structure of $R(D)$, where $D$ is a divisor on a metric graph. This work is an extension of \cite{HMY}. We focus on the computation of the cell complex $|D|$, namely given a metric graph $\Gamma$ and a divisor $D$ on it, how to find the cells in $|D|$ and the $f$-vector of $|D|$. Since $|D|$ may contain a large number of cells and some of these are complicated, the complexity of computation could be high. We introduce the \emph{anchor cell} that serves as the landmarks to find other cells in $|D|$. As a byproduct we can compute the extremal generators of $R(D)$. We implemented the algorithms and computed some examples - namely the canonical linear systems on some trivalent graphs.  

A \emph{metric graph} $\Gamma$ is a connected undirected graph whose edges have positive lengths. A \emph{divisor} $D$ on $\Gamma$ is a formal finite $\mathbb{Z}$-linear combination $D=\sum_{x\in \Gamma}{D(x)\cdot x}$ of points $X$ in the edges of $\Gamma$. The divisor is \emph{effective} if $D(x)\ge 0$ for all $x\in \Gamma$. The \emph{degree} of a divisor $D$ is $\sum_{x\in \Gamma}{D(x)}$. The \emph{support} of a divisor $D$ on $\Gamma$ is the set $\{x\in \Gamma|D(x)\ne 0\}$, denoted as $supp(D)$.

A \emph{(tropical) rational function} $f$ on $\Gamma$ is a continuous function $f:\Gamma \to \R$ that is piecewise-linear on each edge of $\Gamma$ with finitely many pieces and integer slopes. The \emph{order} $\ord_{x}(f)$ of $f$ at a point $x\in \Gamma$ is the sum of the outgoing slopes at $x$ along all directions. Note that if $x$ is an interior point of a linear piece of $f$, then there are two directions at $x$, and $x$ has two opposite outgoing slopes, so $\ord_{x}(f)=0$. The \emph{principal divisor} associated to $f$ is 
\[(f)=\mathop{\sum}_{x\in \Gamma}{\ord_{x}(f)\cdot x}.\] 
So the support of $(f)$ is always finite.

Two divisors $D$ and $D'$ are \emph{linearly equivalent} if $D-D'=(f)$ for some rational function $f$, denoted as $D\sim D'$. For any divisor $D$ on $\Gamma$, let $R(D)$ be the set of all rational functions $f$ on $\Gamma$ such that the divisor $D+(f)$ is effective, and $|D|=\{D+(f)|f\in R(D)\}$, the \emph{linear system} of $D$. 

\begin{example}\label{exa:C4}
	Let $\Gamma$ be a metric graph with graph-theoretic type $C_4$ and equal edge lengths. Below are examples of $D$ and $f\in R(D)$.
	\begin{figure}[H]
		\centering
		\begin{minipage}[t]{0.4\textwidth}
			\centering
			\begin{tikzpicture}[scale=1]
			\draw (0,0) -- (2,0) -- (2,2) --(0,2) -- (0,0);
			\filldraw [black] (0,0) circle (1pt);
			\filldraw [black] (2,0) circle (1pt);
			\filldraw [black] (2,2) circle (1pt);
			\filldraw [black] (0,2) circle (1pt);
			\node [left] at (0,0) {Q};
			\node [right] at (2,0) {R};
			\node [left] at (0,2) {P};
			\node [right] at (2,2) {S};		
			\end{tikzpicture}
			\caption{the metric graph $\Gamma$}\label{fig:graph}
		\end{minipage}\hspace{2cm}
		\begin{minipage}[t]{0.4\textwidth}
			\centering
			\begin{tikzpicture}[scale=1]
			\draw (0,0) -- (2,0) -- (2,2) --(0,2) -- (0,0);
			\filldraw [black] (0,0) circle (1pt);
			\filldraw [black] (2,0) circle (1pt);
			\filldraw [black] (2,2) circle (1pt);
			\filldraw [black] (0,2) circle (1pt);
			\node [left] at (0,0) {2};
			\node [right] at (2,0) {2};
			\node [left] at (0,2) {2};
			\node [right] at (2,2) {2};		
			\end{tikzpicture}
			\caption{the divisor $D$}\label{fig:D}
		\end{minipage}
		
		\begin{minipage}[t]{0.4\textwidth}
		\centering
            \includegraphics[scale=0.6]{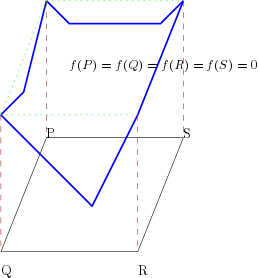}
			\caption{rational function $f$ (blue) on the metric graph $\Gamma$ (black)}\label{fig:f}
		\end{minipage}
		\hspace{2cm}
		\begin{minipage}[t]{0.3\textwidth}
			\centering
			\begin{tikzpicture}[scale=1]
			\draw (0,0) -- (2,0) -- (2,2) --(0,2) -- (0,0);
			\filldraw [black] (2,2) circle (1pt);
			\filldraw [black] (0.5,2) circle (1pt);
			\filldraw [black] (1.5,2) circle (1pt);
			\filldraw [black] (1.33,0) circle (1pt);
			\filldraw [black] (0,1) circle (1pt);
			
			\node [above] at (2,2) {1};
			\node [above] at (0.5,2) {1};
			\node [above] at (1.5,2) {1};
			\node [left] at (0,1) {2};
			\node [above] at (1.33,0) {3};
			\node [left] at (0,0) {Q};
			\node [right] at (2,0) {R};
			\node [left] at (0,2) {P};
			\node [right] at (2,2) {S};
			\end{tikzpicture}
			\caption{the effective divisor $D+(f)$}\label{fig:divi}
		\end{minipage}
	\end{figure}
\end{example}

The metric graph $\Gamma$ is determined by its graph-theoretic type and the lengths of its edges. The graph-theoretic type of $\Gamma$ is called the \emph{skeleton} of $\Gamma$, and the lengths of edges of $\Gamma$ are called the \emph{metric} of $\Gamma$ and denoted by $M$. For an edge $e$ of $\Gamma$ we denote by $M_{e}$ the length of $e$ in $M$. For any metric graph $\Gamma$, the \emph{canonical divisor} $K$ of $\Gamma$ is the divisor on $\Gamma$ with $K(x)=degree(x)-2$ when $x$ is a vertex of $\Gamma$ and $K(x)=0$ otherwise. When we fix the skeleton of $\Gamma$, for any metric $M$ and divisor $D$ on $\Gamma$ we denote by $R(D)_{M}$ the set $R(D)$ and by $|D|_{M}$ the linear system $|D|$.

\begin{remark}
	Given a metric graph $\Gamma=(V,E,M)$ and a divisor $D$ on $\Gamma$. $D$ is \emph{vertex-supported} if $supp(D)\subseteq V$. Note that $supp(D)$ is always a finite set. If $D$ is not vertex-supported, then we can refine $\Gamma$ to get a new metric graph whose set of vertices is $supp(D)$. We shall assume from now on that $D$ is vertex-supported. Also we may assume that $|D|$ is not empty, so if $D$ is not effective, we can consider an effective divisor $D'\in |D|$. It is obvious that $|D'|=|D|$. We shall assume from now that $D$ is vertex-supported. 
\end{remark}

In Section \ref{sec:|D|}, we present the cell complex structure of $|D|$ and introduce the \emph{anchor cells}. We use them as landmarks to find all cells in the complex and prove a combinatorial formula (Corollary \ref{cor:fvec}) for the $f$-vector of $|D|$. We introduce an algorithm to compute the cells of $|D|$ given $\Gamma$ and $D$. In Section \ref{sec:R(D)}, we regard $R(D)$ as a tropical semi-module (convex set) and introduce an algorithm to find the extremal generators of $R(D)$, using a result in \cite{HMY} based on the chip-firing technique. In Section \ref{sec:exp} we apply our algorithms to examples of trivalent graphs. Finally we raise some open problems in Section \ref{sec:q}.

\section{The cell complex $|D|$}\label{sec:|D|}

In this section we present the cell complex structure of $|D|$.

\subsection{The cell complex structure}
Note that if $c$ is a constant rational function on $\Gamma$, then for any rational function $f$ on $\Gamma$, the divisor $D+(f+c)$ is equal to $D+(f)$. Let $\mathbf{1}$ be the set of constant functions on $\Gamma$. The set $R(D)/\mathbf{1}$ can be identified with the linear system $|D|$ by the map $\overline{f}\mapsto D+(f)$. We adapt the formulation of $|D|$ as a cell complex in \cite{HMY}, which originates from \cite{GK} and \cite{MZ}.

\begin{definition}\label{def:cell}
	We identify each open edge $e\in E$ with the interval $(0,M_{e})$ (this implicitly gives an orientation of $e$, while it is independent to the cell complex). Then each cell of $|D|$ is characterized by the following data:
	\begin{itemize}
		\item a nonnegative integer $d_v$ for each $v\in V$;
		\item an ordered partition $d_e=\sum_{i=1}^{r_e}{d_e^{i}}$ of positive integers for some edges $e\in E$;
		\item an integer $m_e$ for each $e\in E$.
	\end{itemize}
	
	A divisor $L$ belongs to this cell if and only if the following statements hold:
	\begin{itemize}
		\item $L(v)=d_v$ for each $v\in V$; 
		\item for those $e\in E$ with the partition above, the divisor $L$ on $e$ is expressed as $\sum_{i=1}^{r_e}{d_e^{i} x_i}$, where $0<x_1<x_2<\ldots<x_{r_e}<M_{e}$; for other edges $e$, $L(x)=0$ for all points $x$ in the interior of $e$; 
		\item for any $f\in R(D)$ such that $L=D+(f)$, the outgoing slope of $f$ at the point $0$ is $m_e$ for each $e\in E$.
	\end{itemize}
\end{definition}

\begin{remark}
	The slopes $m_e$ are also required because two distinct rational functions in $R(D)$ may lead to different effective divisors in $|D|$ with exactly the same values of the $d_v$ and the ordered partitions of $d_e$.
\end{remark}

\begin{example}\label{ext:cou}
	Let $\Gamma=(V,E)$ be a loop $e$ with one vertex $v$ and the length of $e$ equal to $3$. Let $D=3\cdot v$. Then the two effective divisors in Figure \ref{fig:cir} belong to $|D|$. For both, all $d_v$ are $0$ and all $d_e$ are $3$ with one part in the partition, while the slopes $m_e$ are different: one is $-1$ and the other is $-2$. 
\end{example}

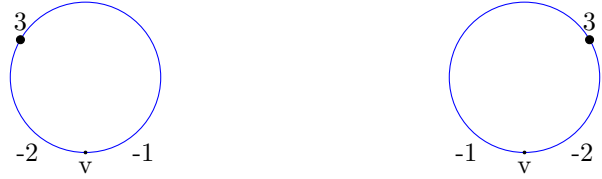
\begin{figure}[h]
	\centering
	\begin{minipage}[t]{0.3\textwidth}
		\centering
		\begin{tikzpicture}[scale=0.5]
		\draw [blue] (2,2) circle [radius=2];
		\filldraw [black] (2,0) circle (1pt);
		\filldraw [black] (0.27,3) circle (3pt);
		\node [below] at (2,0) {v};
		\node [above] at (0.27,3) {3};
		\node [left] at (1,0) {-2};
		\node [right] at (3,0) {-1};		
		\end{tikzpicture}
	\end{minipage}
	\hspace{2cm}
	\begin{minipage}[t]{0.3\textwidth}
		\centering
		\begin{tikzpicture}[scale=0.5]
		\draw [blue] (2,2) circle [radius=2];
		\filldraw [black] (2,0) circle (1pt);
		\filldraw [black] (3.73,3) circle (3pt);
		\node [below] at (2,0) {v};
		\node [above] at (3.73,3) {3};
		\node [left] at (1,0) {-1};
		\node [right] at (3,0) {-2};		
		\end{tikzpicture}
	\end{minipage}
	\caption{Two cells in $|D|$ only differ by $m_e$}\label{fig:cir}
\end{figure}

A cell is represented by any divisor $D+(f)$ in it, because once we have the rational function $f\in R(D)$, we can compute all data in Definition \ref{def:cell}. There is a natural question: how can we tell the dimension of a cell from a representative? The following proposition is a good answer.

\begin{proposition}(\cite[Proposition 13]{HMY})\label{prop:dim}
	Let $D$ be a vertex-supported effective divisor on a metric graph $\Gamma$ and $V$ be the set of vertices in $\Gamma$. Let $C$ be a cell in $|D|$ and $L$ a representative of $C$. Let $I_L=\{x\in \Gamma-V|L(x)>0\}$. Then $\dim C$ is one less than the number of connected components in the graph $\Gamma-I_L$.
\end{proposition}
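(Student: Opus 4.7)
The plan is to parametrize the cell $C$ near a representative $L$ by assigning one real constant to each connected component of $\Gamma - I_L$, modulo an overall shift.

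Fix $L = D + (f_0)$ and let $U_1, \ldots, U_k$ denote the connected components of $\Gamma - I_L$. The first observation I would make is that for any $f \in R(D)$ with $D + (f) \in C$, the local behaviour of $f$ on each $U_i$ is already determined by the cell data up to a single additive constant: on each maximal subinterval of an edge contained in $U_i$ the slope of $f$ is forced to be $m_e + \sum_{j \le \ell} d_e^j$ for the appropriate $\ell$, while at each vertex $v \in U_i$ the order $\ord_v(f) = d_v - D(v)$ is prescribed. Since $U_i$ contains no chip points in its interior, $f|_{U_i}$ is determined by a single real parameter $c_i$, e.g.\ its value at any chosen base point of $U_i$.

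Given a small tuple $(c_1, \ldots, c_k)$, I would then construct a candidate $\tilde f \in R(D)$ whose restriction to $U_i$ matches $f_0|_{U_i} + c_i$. The slopes between consecutive chips on each edge are already prescribed, so the only adjustments are in the chip positions: an Abel-type computation of $\int f'$ along an edge shows that continuity at a chip $\chi$ separating $U_i$ (on the left) from $U_j$ (on the right) forces $\chi$ to move by $\Delta x_\chi = (c_i - c_j)/d_\chi$. For $(c_i)$ sufficiently small, these shifted positions remain in their correct open subintervals and preserve the chip ordering, so the new divisor $D + (\tilde f)$ still has the combinatorial type of $L$ and lies in $C$.

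The step I expect to be hardest is verifying that this prescription is globally consistent when $\Gamma$ contains cycles. The plan is to expand the circulation of $\tilde f'$ around a cycle $\gamma$ as $\oint_\gamma f_0' - \sum_{\chi \in \gamma} d_\chi \Delta x_\chi$; the first term vanishes because $f_0$ is itself a rational function, and substituting $\Delta x_\chi = (c_{i(\chi)} - c_{j(\chi)})/d_\chi$ turns the second into a telescoping sum $\sum_\ell (c_{i_\ell} - c_{i_{\ell+1}})$ as one traverses $\gamma$ through components $U_{i_0}, U_{i_1}, \ldots, U_{i_m} = U_{i_0}$, which cancels to zero. Once this is in place, two tuples $(c_i)$ and $(c_i + c)$ produce functions $\tilde f$ differing by a global constant and hence the same divisor, while distinct classes in $\R^k/\R$ yield distinct chip configurations. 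Therefore the map $(c_i) \mapsto D + (\tilde f)$ descends to a local homeomorphism between $\R^k/\R$ and a neighbourhood of $L$ in $C$, giving $\dim C = k - 1$.
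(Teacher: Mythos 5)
The paper itself gives no proof of this proposition --- it is quoted from \cite[Proposition 13]{HMY} --- and your argument is essentially the proof given there: the cell is parametrized locally near $L$ by one additive constant per connected component of $\Gamma-I_L$, modulo a simultaneous global shift, with each interior chip sliding by $(c_i-c_j)/d_\chi$ to restore continuity, which yields local dimension $k-1$. Your outline is correct; the only points worth tightening are that $\tilde f$ agrees with $f_0|_{U_i}+c_i$ only away from the shifted kinks (not on all of $U_i$), that a chip whose two sides lie in the same component has $c_i=c_j$ and hence does not move (which is exactly why such chips contribute no modulus), and that surjectivity onto a neighborhood of $L$ in $C$ follows because for any nearby $f$ with $D+(f)\in C$ the difference $f-f_0$ has slope $0$ away from the chips and is therefore constant on each component.
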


\begin{corollary}\label{cor:dim}
	Let $D$ be a vertex-supported effective divisor on a metric graph $\Gamma$ and $d=\deg(D)$. Then the dimension of the cell complex $|D|$ is at most $d$. In addition, if $\Gamma$ is $2$-connected, then the dimension of the cell complex $|D|$ is at most $d-1$.
\end{corollary}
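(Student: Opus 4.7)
The plan is to apply Proposition \ref{prop:dim} cell by cell: it suffices to show that for every cell $C \subseteq |D|$ with representative $L$, the graph $\Gamma - I_L$ has at most $d+1$ connected components (and at most $d$ components in the $2$-connected case), since $\dim C$ equals that number minus one.

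First I would bound the cardinality of $I_L$. Because $L$ is effective and integer-valued, every point in $I_L$ contributes at least $1$ to $\deg L = d$, so $|I_L| \le d$. Next I would analyze how the number of connected components grows as the points of $I_L$ are removed one at a time. The ambient metric graph $\Gamma$ is connected, and each $x \in I_L$ lies in the interior of some edge, so locally $\Gamma$ has exactly two branches at $x$. Hence removing a single such $x$ can increase the number of connected components by at most $1$. After removing all $|I_L|$ points we obtain at most $1 + |I_L| \le 1 + d$ components, which yields $\dim C \le d$ and hence $\dim |D| \le d$.

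For the $2$-connected strengthening, the key extra input is that in a $2$-connected metric graph no edge is a bridge, so deleting any single interior point of an edge leaves the graph connected. Therefore the first point removed from $I_L$ keeps the component count equal to $1$; only the remaining $|I_L| - 1$ removals can each contribute $+1$. This gives at most $1 + (|I_L| - 1) = |I_L| \le d$ components, and thus $\dim C \le d - 1$. Taking the maximum over all cells $C$ of $|D|$ completes the proof.

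The argument is essentially bookkeeping once Proposition \ref{prop:dim} is available, so I do not foresee a genuine obstacle. The only point that deserves care is the ``at most $+1$ per removal'' claim: one should verify that $I_L$ consists only of interior points of edges (true by definition, since $I_L \subseteq \Gamma - V$), so each removal is the removal of a single interior point of an edge, whose local complement has exactly two components. With that observation in hand, the connectivity count and the $2$-connected refinement follow directly.
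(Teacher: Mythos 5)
Your proposal is correct and follows essentially the same route as the paper: bound $|I_L|\le d$ via effectivity and integrality, note that removing an interior point of an edge increases the component count by at most one, and in the $2$-connected case observe that the first removal leaves the graph connected. No meaningful differences from the paper's argument.
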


\begin{proof}
	Let $C$ be a cell in $|D|$ and $L$ a representative of $C$. Then $\deg(L)=\deg(D)=d$. So the support of $L$ contains at most $d$ points. By definition, $I_L\subseteq supp(L)$, so $I_L$ contains at most $d$ points as well. Now we consider the connected components in the graph $\Gamma-I_L$. First $\Gamma$ is connected. And each time we delete a point in $I_L$, the number of connected components can increase by at most $1$, because the deleted point is always interior to an edge of the current graph. Therefore there are at most $d+1$ connected components in the graph $\Gamma-I_L$. Hence, by Proposition \ref{prop:dim}, we conclude that $\dim C\le d$. If $\Gamma$ is $2$-connected, then when we delete one point in $I_L$ from $\Gamma$, the remaining graph is still connected, so there are at most $d$ connected components in the graph $\Gamma-I_L$. Hence $\dim C\le d-1$.
\end{proof}

In order to find the cells in $|D|$, we need to know whether there are finitely many of them. Fortunately we have the following theorem.

\begin{theorem}(\cite[Theorem 14]{HMY})\label{thm:fin}
	The cell complex $|D|$ has finitely many vertices.
\end{theorem}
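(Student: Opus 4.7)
By Proposition~\ref{prop:dim}, the vertices of $|D|$ are precisely those divisors $L\in|D|$ for which $\Gamma-I_L$ is connected. The plan is to bound the discrete data of a vertex cell in the sense of Definition~\ref{def:cell} and then to show that each discrete type admits at most one realisation.

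The first observation is that a vertex cell carries at most one interior chip per edge: two interior chips $x_1<x_2$ on a single edge $e$ would make the open sub-interval $(x_1,x_2)$ a floating component of $\Gamma-I_L$. Hence $r_e\in\{0,1\}$ for every $e$, and the ordered partitions in Definition~\ref{def:cell} are trivial. The multiplicities $(d_v,d_e)$ are nonnegative integers summing to $\deg D$, and so have finitely many values. To bound the integer slopes $m_e$ I would combine (a) the order-conservation equations $\sum_{e\ni v}(\text{outgoing slope of }f\text{ at }v\text{ along }e)=d_v-D(v)$ at each vertex; (b) the monotonicity of the slope of $f$ along each edge, which follows from $\ord_x(f)=L(x)\ge 0$ at interior $x$ and forces the slope to jump by exactly $d_e$ from the tail to the head of $e$; and (c) the cycle-continuity relations expressing that the net change of $f$ around each independent cycle of $\Gamma$ vanishes. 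Once the multiplicities are fixed, these constraints confine $(m_e)\in\Z^{|E|}$ to a bounded region, leaving only finitely many valid slope vectors.

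Given the discrete type, a divisor in the cell is determined by the at-most-$|E|$ interior chip positions, which lie in the solution set of the cycle-continuity affine system inside the open box of admissible positions. Being $0$-dimensional, this solution set is a single point. Combined with the finiteness in the previous step this yields finitely many vertices of $|D|$. The main obstacle is the slope bound: a single vertex equation does not a priori forbid compensating slopes of arbitrarily large magnitude, and ruling them out requires globally combining the vertex equations with cycle continuity, for instance by normalising $f$ to vanish at a chosen basepoint and arguing that the resulting range of $f$ is controlled by $\Gamma$ and $\deg D$.
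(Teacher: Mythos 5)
Your skeleton is sound and is essentially the route the paper itself takes later for anchor cells (the statement here is quoted from \cite[Theorem 14]{HMY} without proof): by Proposition~\ref{prop:dim} a vertex of $|D|$ has at most one chip in the interior of each edge (this is Lemma~\ref{lem:ver}), so its data in the sense of Definition~\ref{def:cell} are nonnegative multiplicities $(d_v,d_e)$ summing to $\deg D$ with trivial partitions, plus the integer slopes $m_e$; since distinct cells carry distinct data, finiteness of the vertices reduces to finiteness of the admissible slope vectors (compare Corollary~\ref{cor:anc}). Your last step about the chip positions forming a single point is not needed for the count, and is harmless.

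The genuine gap is the one you flag yourself: the bound on the slopes. Your constraints (a) vertex balance, (b) jump by $d_e$ across the chip, (c) cycle continuity do define the solution set, but asserting that they ``confine $(m_e)$ to a bounded region'' is exactly the content of \cite[Lemma 7]{HMY}, quoted in the paper as Lemma~\ref{lem:bd}, and your sketch does not prove it: a single vertex equation allows large compensating slopes, cycle continuity says nothing about bridges, and the proposed fix --- normalise $f$ and argue its range is controlled by $\Gamma$ and $\deg D$ --- is circular as stated, since the only handle you have on the range of a piecewise linear function is its slopes times the edge lengths. (Conversely, a range bound \emph{would} suffice here, because on an edge with one chip $|m_e|\le |f(\text{head})-f(\text{tail})|/M_e+d_e$; the missing piece is precisely how to get that bound.) The standard argument goes the other way: if $f$ has slope $s\ge 1$ on some segment, cut at a point $p$ interior to that linear piece with $t=f(p)$ a generic value, and let $U$ be the connected component of $\{f\ge t\}$ on the uphill side; summing orders over $U$, the interior contributions telescope and each boundary direction leaves $U$ downhill, so $\sum_{x\in U}\ord_x(f)\le -s$, while effectiveness of $D+(f)$ gives $\sum_{x\in U}\ord_x(f)\ge -\sum_{x\in U}D(x)\ge -\deg D$, whence $|s|\le\deg D$. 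Either supply this argument or simply invoke Lemma~\ref{lem:bd}, which the paper already takes as a black box; with that, your proof closes.
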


Since each cell is uniquely determined by its vertices, we have the following corollary.

\begin{corollary}\label{cor:fin}
	The cell complex $|D|$ has finitely many cells.
\end{corollary}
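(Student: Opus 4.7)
The plan is to combine Theorem~\ref{thm:fin} with the polytopal nature of the cells in $|D|$, so that each cell has finitely many vertices and is uniquely determined by them. First, I would examine Definition~\ref{def:cell}: once the discrete data — the integers $(d_v)$, the ordered partitions of the $(d_e)$, and the slopes $(m_e)$ — that label a cell $C$ are fixed, the remaining freedom consists of choosing the chip positions $0<x_1<\cdots<x_{r_e}<M_e$ on each edge $e$ supporting chips. This identifies $C$ with the relative interior of a product of open simplices, hence with the relative interior of a bounded polytope $\overline{C}$, which has only finitely many extreme points.

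Next, I would identify the extreme points of $\overline{C}$ with the $0$-dimensional cells of $|D|$ lying in its closure. At an extreme point, some consecutive positions $x_i$ must coalesce or some $x_i$ must reach an endpoint of its edge; the corresponding effective divisor $L$ then has its interior support $I_L$ as large as possible (in the sense of Proposition~\ref{prop:dim}), so that $\Gamma-I_L$ is disconnected into the maximum number of components. By Proposition~\ref{prop:dim} this forces $\dim=0$, meaning the limit is a $0$-cell of the complex $|D|$.

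Finally, since a polytope is the convex hull of its extreme points, each cell $C$ of $|D|$ is determined by the subset of $0$-cells of $|D|$ that appear as extreme points of $\overline{C}$. Theorem~\ref{thm:fin} says that the set of $0$-cells of $|D|$ is finite; there are therefore only finitely many such subsets, and hence only finitely many cells in $|D|$.

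The main obstacle I expect is the second step: verifying carefully that every extreme point of $\overline{C}$ is a $0$-cell of $|D|$, rather than a boundary point belonging to some positive-dimensional cell of the complex. This requires tracking how the slope data $(m_e)$ behave in the limit, since the cell-complex identification in Definition~\ref{def:cell} is sensitive to the slopes and not only to the underlying divisor. Once that check is in place, the counting argument of the third paragraph is immediate.
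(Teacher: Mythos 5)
Your overall route is the same as the paper's: the paper deduces Corollary~\ref{cor:fin} from Theorem~\ref{thm:fin} together with the (asserted) fact that each cell is uniquely determined by its vertices, and your proposal is an attempt to justify that fact polytopally before counting subsets of the finite vertex set. Two points in your elaboration need repair. First, a cell with fixed discrete data is \emph{not} the full product of open simplices of chip positions $0<x_1<\cdots<x_{r_e}<M_e$: continuity of the rational function around cycles of $\Gamma$ imposes linear relations among the positions (in Example~\ref{ext:cou} the slopes pin the single chip to one point, giving a $0$-cell even though the naive position parameter is one-dimensional). What survives is only that the cell is the relative interior of a bounded polyhedron, which is all you actually use.

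Second, and more seriously, the dimension argument in your key step is inverted. By Proposition~\ref{prop:dim}, $\dim C$ is one less than the number of connected components of $\Gamma-I_L$, so a $0$-cell corresponds to $\Gamma-I_L$ being \emph{connected}, i.e.\ to the \emph{fewest} components, not the most. When chip positions coalesce or reach an endpoint of an edge, the interior support $I_L$ \emph{shrinks}, the number of components of $\Gamma-I_L$ can only drop, and the dimension goes down; it is this monotonicity under degeneration that should be used to argue that a point of $\overline{C}$ at which the position polytope is fully degenerate lies in a $0$-cell. As written, ``$I_L$ as large as possible, maximally disconnecting $\Gamma$, hence $\dim=0$'' asserts the opposite of what Proposition~\ref{prop:dim} gives and would instead yield maximal dimension. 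With that step corrected, and granting (as the paper does, via the polyhedral structure from \cite{HMY}) that a closed cell is the convex hull of its vertices so that distinct cells have distinct vertex sets, your final counting of subsets of the finitely many vertices is exactly the paper's one-line argument.
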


\subsection{Anchor divisors and anchor cells}

In order to find the cells given $\Gamma$ and $D$, one approach is that we first find the vertices of $|D|$, then find other cells of $|D|$. As for the vertices, it is straightforward to implement the method in the proof of \cite[Theorem 6]{HMY}. However, there might be many general cells in $|D|$ with rather complicated structures, because there might be many parts in the partition on one edge in the data in Definition \ref{def:cell}. To grasp the general cells in $|D|$, we introduce an important type of divisors and cells in $|D|$ that serve as landmarks.

\begin{definition}\label{def:anc}
	A divisor $L$ on $\Gamma$ is an \emph{anchor divisor} if for each edge of $\Gamma$ there is at most one its interior point $x$ with $L(x)>0$. A cell $C$ in $|D|$ is an \emph{anchor cell} if all representatives $L$ of $C$ are anchor divisors.
\end{definition}

\begin{remark}
	The number of anchor divisors in $|D|$ could be infinite. For example if an anchor cell has dimension $1$, then it contains infinitely many divisors and each such divisor is an anchor divisor. Nonetheless, they all represent the same anchor cell because they share the same data in Definition \ref{def:cell}.
\end{remark}

One property of anchor cells is that they contain all vertices of $|D|$.

\begin{lemma}\label{lem:ver}
	Each vertex of $|D|$ is an anchor cell.
\end{lemma}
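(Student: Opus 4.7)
The plan is to apply the dimension formula from Proposition \ref{prop:dim} and argue contrapositively, showing that any divisor $L$ failing to be an anchor divisor must represent a cell of positive dimension.

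Let $C$ be a vertex of $|D|$, so $\dim C = 0$, and let $L$ be any representative of $C$. By Proposition \ref{prop:dim}, this forces $\Gamma - I_L$ to have exactly one connected component, where $I_L = \{x \in \Gamma - V : L(x) > 0\}$. Now suppose toward a contradiction that $L$ is not an anchor divisor. Then some edge $e \in E$ contains two distinct interior points $x_1, x_2$ with $L(x_1), L(x_2) > 0$, so $x_1, x_2 \in I_L$. Since $L$ has finite support, we may choose two consecutive points $y_1 < y_2$ of $I_L \cap \mathrm{int}(e)$ lying between $x_1$ and $x_2$ (inclusive).

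The open interval $(y_1, y_2) \subset \mathrm{int}(e)$ is disjoint from $I_L$ by the choice of consecutive points, so it is contained in $\Gamma - I_L$. Moreover, $(y_1, y_2)$ is its own connected component of $\Gamma - I_L$: its closure in $\Gamma$ meets the rest of the graph only at the two points $y_1$ and $y_2$, both of which have been removed, and no vertex of $\Gamma$ lies in the interior of an edge. Consequently $\Gamma - I_L$ has at least two components, namely $(y_1, y_2)$ and the component containing any vertex of $\Gamma$ (which is nonempty since $V \neq \varnothing$). This contradicts $\dim C = 0$, so every representative $L$ of $C$ must be an anchor divisor, and $C$ is an anchor cell.

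The argument is almost entirely topological, and the only subtle point is the separation claim about $(y_1, y_2)$; this relies on the standing identification of open edges with intervals $(0, M_e)$ having no interior vertices, so that the interior of $e$ between two removed points is a genuinely isolated arc. Note that no consideration of the slope data $m_e$ or the partition $d_e = \sum d_e^i$ is needed, since Proposition \ref{prop:dim} reduces the dimension question entirely to the topology of $\Gamma - I_L$.
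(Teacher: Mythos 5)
Your proposal is correct and follows essentially the same route as the paper: argue contrapositively via Proposition \ref{prop:dim}, observing that a non-anchor representative has two interior points of one edge in $I_L$, whose removal isolates a segment of that edge and forces $\Gamma - I_L$ to have at least two components, hence $\dim C \ge 1$. Your choice of consecutive points $y_1,y_2$ merely makes explicit the separation step that the paper leaves implicit.
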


\begin{proof}
	Suppose that $L\in |D|$ is not an anchor divisor. There exists an edge $e$ of $\Gamma$ and two distinct interior points $P,Q$ of $e$ such that $L(P),L(Q)>0$, then $P,Q\in I_L$. After deleting $P$ and $Q$ the graph is no longer connected. So the number of connected components in the graph $\Gamma-I_L$ is at least $2$. Hence, by Proposition \ref{prop:dim}, $L$ is the representative of a cell with dimension at least $1$. Therefore, if a divisor represents a vertex of $|D|$, it must be an anchor divisor. So each vertex is an anchor cell.
\end{proof}

\begin{definition}\label{def:sa}
	Let $A$ be an anchor cell of $|D|$. For each edge of $\Gamma$ we consider whether there is an ordered partition in the data of $A$. If $e_1,\ldots,e_r$ are the edges of $\Gamma$ such that in the data of $A$ there is an ordered partition $c_i=c_i$ on $e_i$, then we define $s(A)=\sum_{i=1}^{r}{(c_i-1)}$; otherwise we define $s(A)=0$.
\end{definition}

The motivation for anchor cells is the following theorem.

\begin{theorem}(Association to anchor cells)\label{thm:anc}
	Let $D$ be a vertex-supported effective divisor on a metric graph $\Gamma$.
	(1) Suppose $\cal{C}$ is the set of all cells in $|D|$ and $\cal{A}$ is the set of all anchor cells in $|D|$. Then there is a function $a: \cal{C} \to \cal{A}$ such that for $C\in \cal{C}$, let $L\in C$ be a representative of $C$. Then $a(C)$ is represented by an anchor divisor $N$ such that
	\begin{enumerate}[(i)]
		\item $N(v)=L(v)$ for all vertices $v$ of $\Gamma$;
		\item the corresponding rational functions of $N$ and $L$ have the same slopes at the endpoints of every edge of $\Gamma$.
	\end{enumerate}
	
	(2) Let $a$ be the same map as in (1). Then for any anchor cell $A\in \cal{A}$, we have
	\[|a^{-1}(A)|=2^{s(A)}.\]
    Furthermore, for $0\le j\le s(A)$, there are $\binom{s(A)}{j}$ cells of dimension $\dim A +j$ in $a^{-1}(A)$.
\end{theorem}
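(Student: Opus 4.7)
The plan is to construct $a$ explicitly by merging, on each edge, all positive-mass interior points of a representative into a single weighted centroid, and then to parametrize the fiber $a^{-1}(A)$ by ordered compositions of the anchor masses. Given a cell $C\in\mathcal{C}$ with representative $L$ whose data on edge $e$ is $L|_e = \sum_{i=1}^{r_e} d_e^i \, x_i$ with total $d_e = \sum_i d_e^i$, I would define $N$ to have the same vertex values as $L$ and, on each edge with $d_e>0$, a single interior mass $d_e$ placed at $\bar{x}_e = \tfrac{1}{d_e}\sum_i d_e^i x_i \in (0,M_e)$. To verify $N\sim L$, on each such edge I would construct a piecewise-linear function $g_e\colon [0,M_e]\to\R$ with $g_e(0)=g_e(M_e)=0$, zero slopes at both endpoints, and principal divisor on the open edge equal to $d_e\cdot\bar{x}_e - \sum_i d_e^i\cdot x_i$; an elementary integration shows such a $g_e$ exists with integer slopes precisely because of the centroid identity defining $\bar{x}_e$. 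Glueing the $g_e$ by extension-by-zero to $\Gamma$ yields a rational function $g$ with $(g)=N-L$, so $N\in |D|$. The zero-slope condition at endpoints of each $g_e$ preserves the $m_e$ of $L$, and by construction $N$ is an anchor divisor; let $a(C)$ be its cell.

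Well-definedness of $a$ on $C$ is immediate: two representatives of $C$ share the data $(d_v,(d_e^i),m_e)$ by Definition \ref{def:cell}, so the associated anchor divisors share $(d_v,d_e,m_e)$ and therefore represent the same anchor cell. For the counting statement, fix an anchor cell $A$ with data $d_v$, $m_e$, and interior masses $c_1,\ldots,c_r$ on edges $e_1,\ldots,e_r$. A cell $C$ lies in $a^{-1}(A)$ if and only if $C$ has the same $(d_v,m_e)$ as $A$, carries no interior mass outside $\{e_1,\ldots,e_r\}$, and on each $e_i$ admits an ordered composition $c_i = d_{e_i}^1+\cdots+d_{e_i}^{r_{e_i}}$ of positive integers; existence of at least one representative for such a composition follows by placing $r_{e_i}$ nearby positions around any $\bar{x}_{e_i}\in(0,M_{e_i})$ satisfying the weighted-centroid constraint. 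Since the number of ordered compositions of $c_i$ into positive parts is $2^{c_i-1}$, we get $|a^{-1}(A)| = \prod_{i=1}^{r} 2^{c_i-1} = 2^{s(A)}$.

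For the refined count by dimension I would apply Proposition \ref{prop:dim} and track how $I_L$ differs from $I_N$. Removing the points of $I_L$ from $\Gamma$ edge-by-edge, the first removal on each $e_i$ increases the component count by $1$ iff $e_i$ is a bridge of $\Gamma$—a contribution identical for $A$ and $C$—while every subsequent removal on $e_i$ splits an already-present open arc and contributes $+1$ unconditionally. This yields $\dim C-\dim A = \sum_{i=1}^{r}(r_{e_i}-1)$. Setting $k_i=r_{e_i}-1$ and noting that the number of compositions of $c_i$ into $r_{e_i}$ positive parts is $\binom{c_i-1}{k_i}$, the number of cells in $a^{-1}(A)$ of dimension $\dim A+j$ equals
\[
\sum_{k_1+\cdots+k_r=j}\prod_{i=1}^{r}\binom{c_i-1}{k_i} \;=\; \binom{s(A)}{j}
\]
by the Vandermonde--Chu identity. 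The step I expect to be the main obstacle is the $N\sim L$ claim: one must check carefully that the centroid identity is exactly the integrability condition for $g_e$ to exist with integer slopes vanishing at both endpoints, so that extension-by-zero to $\Gamma$ does not disturb the vertex orders and the slopes $m_e$ are genuinely preserved. The component-counting in the dimension step is routine once the first-versus-subsequent-removal dichotomy per edge is isolated.
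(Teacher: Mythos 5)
Your proposal is correct and takes essentially the same route as the paper: consolidate each edge's interior mass into a single interior point while preserving vertex values and endpoint slopes, then count the fiber by ordered compositions of the $c_i$ and track dimensions via Proposition \ref{prop:dim}, finishing with Vandermonde. Your centroid construction in fact yields the very same anchor divisor as the paper's (which keeps only the first and last slopes of $f$ on each edge; those two pieces meet precisely at your weighted centroid), and the only small slip---that a first removal disconnects iff $e_i$ is a bridge of $\Gamma$, whereas it should be a bridge of the graph remaining after earlier removals---is harmless, since all you use is that this contribution is identical for $A$ and $C$.
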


\begin{proof}
	(1) Given $D$ and $L$, we construct $N$ as follows. Let $f\in R(D)$ such that $L=D+(f)$. Then we construct another rational function $g\in R(D)$. First we let $g(v)=f(v)$ for vertices $v$ of $\Gamma$; next we let $g(x)=f(x)$ for all points $x\in e$, where $e$ is an edge of $\Gamma$ such that $\sum_{y\in e^{\circ}}{L(y)}=0$; for other edges $e$ of $\Gamma$, we regard the open edge $e$ as the interval $(0,l_e)$. Suppose $f$ has $r$ linear pieces $[p_i,p_{i+1}]$ on $e$, with slope $s_i$, where $0=p_0<p_1<\ldots<p_{r}=l_e$. Then for $1\le i\le r-1$, we have $L(p_i)=s_{i+1}-s_{i}$. Since the adjacent linear pieces have different slopes, $s_{i+1}-s_{i}\ne 0$. While $L$ is effective, so $s_{i+1}-s_{i}>0$. Now we let $g$ have two linear pieces on $e$, the one containing $p_0$ with slope $s_1$, the one containing $p_r$ with slope $s_r$. Note that if $r=2$ then $g$ coincides with $f$ on $e$. If $r\ge 3$, since $s_1<s_2<\ldots<s_r$, the graph of the two pieces of $g$ will intersect within the interval, so $g$ is well-defined on $e$. In addition let $p$ be the intersection point of these two pieces, then $(D+(g))(p)=s_r-s_1>0$. We let $N=D+(g)$. By definition of $g$, $N$ is effective and anchor, and $N$ satisfies both (i) and (ii).
	
	(2)	Note that for $C\in \cal{C}$, the two cells $C$ and $a(C)$ share the same first and third parts of their data, and given any edge $e$ of $\Gamma$, either both do not have a partition, or they have the partitions of the same positive integer. The only difference is that the partition of $a(C)$ is always trivial, while the partition of $C$ could be arbitrary. Given a positive integer $t$, there are $2^{t-1}$ ordered partitions of $t$ objects in a row, because each partition corresponds to a $t-1$-tuple of $0,1$, indicating whether or not to break the $t-1$ pairs of adjacent objects. In addition, for $1\le i\le t$, there are $\binom{t-1}{i-1}$ ordered partitions with $i$ parts.
	
	Then if $p_i$ is an interior point on edge $e_i$ with $A(p_i)=c_i$, the partition on $e_i$ of a preimage of $A$ has $2^{c_i-1}$ choices. Once all partitions are determined, so is the preimage. Then $|a^{-1}(A)|=2^{s(A)}$. Furthermore, note that if the number of parts in the partition on one edge increases by $1$, the number of connected components in Proposition \ref{prop:dim} also increases by $1$, which means the dimension of the new cell is one more. Then for $0\le j\le s(A)$, the number of cells with dimension $\dim A +j$ in $a^{-1}(A)$ equals to the number of the $r$-tuples of partitions of $c_1,\ldots,c_r$ with $r+j$ parts in total. This number is $\binom{s(A)}{j}$ by an easy argument of generating functions.
\end{proof}

The following corollary provides a combinatorial formula to compute the $f$-vector of $|D|$ given all of its anchor cells. 

\begin{corollary}\label{cor:fvec}
	Let $D$ be a vertex-supported effective divisor on a metric graph $\Gamma$. For an anchor cell $A$ in $|D|$, define $s(A)$ as in Theorem \ref{thm:anc}. If $A_1,A_2,\ldots,A_m$ are all anchor cells in $|D|$, then for each $d\in \mathbb{N}$ the number of $d$-dimensional cells in the cell complex $|D|$ is the coefficient of $x^d$ in the generating function
	\[\mathop{\sum}_{i=1}^{m}{x^{\dim A_{i}}(1+x)^{s(A_i)}}.\]
\end{corollary}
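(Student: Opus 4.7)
The plan is to view the corollary as a direct repackaging of Theorem~\ref{thm:anc} via generating functions. Since the map $a: \mathcal{C} \to \mathcal{A}$ constructed in part (1) of that theorem is a genuine function defined on the entire set of cells, the fibers $a^{-1}(A_1), \ldots, a^{-1}(A_m)$ partition $\mathcal{C}$. So the number of $d$-dimensional cells in $|D|$ can be written as
\[ \#\{C \in \mathcal{C} : \dim C = d\} \;=\; \sum_{i=1}^{m} \#\{C \in a^{-1}(A_i) : \dim C = d\}. \]

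Next I would apply part (2) of Theorem~\ref{thm:anc} fiberwise: for each $i$ and each integer $j$ with $0 \le j \le s(A_i)$, the fiber $a^{-1}(A_i)$ contains exactly $\binom{s(A_i)}{j}$ cells of dimension $\dim A_i + j$, and no cells of any other dimension. Setting $j = d - \dim A_i$, the contribution of $A_i$ to the count of $d$-dimensional cells is $\binom{s(A_i)}{d - \dim A_i}$, with the usual convention that the binomial coefficient vanishes when $d - \dim A_i$ is negative or exceeds $s(A_i)$.

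Finally I would recognize this integer as a generating function coefficient. By the binomial theorem,
\[ x^{\dim A_i}(1+x)^{s(A_i)} \;=\; \sum_{j=0}^{s(A_i)} \binom{s(A_i)}{j} x^{\dim A_i + j}, \]
so $[x^d]\bigl(x^{\dim A_i}(1+x)^{s(A_i)}\bigr) = \binom{s(A_i)}{d-\dim A_i}$. Summing over $i=1,\ldots,m$ and interchanging the sum with coefficient extraction yields the stated formula.

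There is really no obstacle here beyond careful bookkeeping: the only substantive ingredient is Theorem~\ref{thm:anc}(2), and once that is granted, the corollary reduces to the binomial identity above together with the fact that the fibers of $a$ partition $\mathcal{C}$. The main thing worth checking explicitly is that every cell of $|D|$ appears in exactly one fiber, which is immediate because $a$ is defined on all of $\mathcal{C}$ and anchor cells themselves map to themselves (the anchor divisor $N$ constructed in the proof of Theorem~\ref{thm:anc}(1) coincides with $L$ when $L$ is already anchor).
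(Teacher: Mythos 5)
Your proposal is correct and follows the same route as the paper: partition $\mathcal{C}$ into the fibers $a^{-1}(A_i)$ using Theorem \ref{thm:anc}(1), count cells of each dimension in a fiber via Theorem \ref{thm:anc}(2), and sum the resulting generating functions $x^{\dim A_i}(1+x)^{s(A_i)}$. You simply spell out the binomial-coefficient extraction more explicitly than the paper does.
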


\begin{proof}
	By Theorem \ref{thm:anc}(1), each cell in $|D|$ is associated to a unique anchor cell of $|D|$, i.e. $\cup_{i=1}^{m}{a^{-1}(A_i)}=\cal{C}$. By Theorem \ref{thm:anc}(2), the generating function for $a^{-1}(A_i)$ is $x^{\dim A_{i}}(1+x)^{s(A_i)}$. So the total generating function is just their sum.
\end{proof}

\begin{example}
	Let $\Gamma$ have the skeleton of the complete bipartite graph $K_{3,3}$ and let $K$ be the canonical divisor on $\Gamma$. If $L$ is the left divisor in Figure \ref{fig:div}, then $L$ has two chips on two edges respectively. Then  $2^{2-1}\cdot 2^{2-1}=4$ cells of $|K|$ are associated to $L$. By Theorem \ref{prop:dim}, their dimensions are $0,1,1,2$ respectively.
\end{example}

\subsection{Computing the anchor cells in $|D|$}\label{ssec:alg}

With the anchor cells, we have the following approach for finding the cells of $|D|$.

\begin{enumerate}[(a)]
	\item computing the anchor cells of $|D|$; \label{enu:anc}
	\item given the anchor cells of $|D|$, computing the other cells of $|D|$. \label{enu:asso}
\end{enumerate}

Step (\ref{enu:asso}) is done by Corollary \ref{cor:fvec}. We then explain implementations of Step (\ref{enu:anc}). We first introduce some properties of anchor cells.

\begin{lemma}\label{lem:2piece}
	Let $D$ be a vertex-supported effective divisor on a metric graph $\Gamma$. If $f\in R(D)$ and $D+(f)$ is an anchor divisor, then $f$ has at most two linear pieces on each edge of $\Gamma$.
\end{lemma}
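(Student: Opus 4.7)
The plan is to fix an edge $e$ of $\Gamma$, suppose $f$ has $r$ maximal linear pieces on $e$, and show that the anchor condition forces $r \le 2$. Parametrize the open edge as $(0,M_e)$ and let the breakpoints be $0 = p_0 < p_1 < \cdots < p_{r-1} < p_r = M_e$, with $f$ having slope $s_i$ on the piece $[p_{i-1}, p_i]$. Since these are \emph{maximal} linear pieces, consecutive slopes must differ: $s_i \ne s_{i+1}$ for $1 \le i \le r-1$.

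Next, I would compute $\ord_{p_i}(f)$ at each interior breakpoint $p_i$ ($1 \le i \le r-1$). Because $p_i$ lies in the interior of $e$, there are exactly two directions at $p_i$: moving to the right along the piece with slope $s_{i+1}$ gives outgoing slope $s_{i+1}$, and moving to the left along the piece with slope $s_i$ gives outgoing slope $-s_i$. Hence $\ord_{p_i}(f) = s_{i+1} - s_i$. Since $D$ is vertex-supported, $D(p_i) = 0$, so
\[
L(p_i) \;=\; D(p_i) + \ord_{p_i}(f) \;=\; s_{i+1} - s_i.
\]

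Now invoke effectivity and the maximal-piece condition: $L$ is effective gives $s_{i+1} - s_i \ge 0$, and $s_{i+1} \ne s_i$ strengthens this to $s_{i+1} - s_i > 0$. Thus $L(p_i) > 0$ for every interior breakpoint $p_i$ with $1 \le i \le r-1$.

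Finally, apply the anchor hypothesis. By Definition \ref{def:anc}, at most one interior point of $e$ can have $L$-value positive. If $r \ge 3$, then $p_1$ and $p_2$ are two distinct interior points of $e$ both with $L(p_j) > 0$, contradicting that $L$ is an anchor divisor. Therefore $r \le 2$, i.e.\ $f$ has at most two linear pieces on $e$. There is no real obstacle here; the argument is a direct unwinding of the definitions of order, effectivity, and anchor, combined with the observation that $D$'s support avoids edge interiors.
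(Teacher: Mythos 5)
Your proof is correct and follows essentially the same route as the paper: both arguments observe that at any interior breakpoint of an edge the value of $D+(f)$ equals the slope difference, which is nonzero and hence positive by vertex-supportedness of $D$ and effectivity, so the anchor condition permits at most one breakpoint per edge. Your version merely makes the breakpoint/slope bookkeeping explicit; no gap.
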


\begin{proof}
	For any edge $e$ of $\Gamma$, suppose $x$ is an interior point of $e$ and it is the intersection of two linear pieces of $f$ with different slopes. Then, by definition, we have $(f)(x)\ne 0$. Since $D$ is vertex-supported, $D(x)=0$. Then $(D+(f))(x)\ne 0$. Since $f\in R(D)$, the divisor $D+(f)$ is effective, so $(D+(f))(x)>0$. However $D+(f)$ is an anchor divisor, meaning that there is at most one such point $x$, so $f$ has at most two linear pieces on $e$. 
	
	Therefore if the outgoing slopes at the two endpoints of $e$ sum to $0$, then $f$ is linear on $e$; otherwise $f$ has two linear pieces on $e$ and there is one interior point $x$ of $e$ such that $(D+(f))(x)>0$.
\end{proof}

\begin{corollary}\label{cor:anc}
	Let $D$ be a vertex-supported divisor on a metric graph $\Gamma$. If $C$ is an anchor cell and it is represented by a divisor $D+(f)$, then $C$ is uniquely determined by the outgoing slopes of $f$ at all vertices of $\Gamma$. 
\end{corollary}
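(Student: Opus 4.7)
The plan is to unpack Definition \ref{def:cell} and verify that each piece of the data defining the anchor cell $C$ is forced by the outgoing slopes of $f$ at the vertices of $\Gamma$. Recall that a cell is specified by (i) the integers $d_v$ at each vertex, (ii) an ordered partition of $d_e$ on those edges carrying interior chips, and (iii) the integer $m_e$ for each edge (the outgoing slope at the endpoint identified with $0$). Pieces (i) and (iii) are essentially tautological: $m_e$ is itself one of the outgoing slopes, and $d_v = D(v) + \ord_v(f)$ where $\ord_v(f)$ is by definition the sum of outgoing slopes at $v$, so both are determined once $D$ and the vertex slopes are fixed.

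The substantive step is piece (ii). Here I would invoke Lemma \ref{lem:2piece}: since $D + (f)$ is an anchor divisor, $f$ has at most two linear pieces on each edge $e$. This means $f|_e$ is determined up to the location of the (possible) break point by the outgoing slopes $s_0, s_1$ at the two endpoints of $e$. If $s_0 + s_1 = 0$ then $f$ is linear on $e$, so $d_e = 0$ and no partition appears; otherwise $f$ has exactly two linear pieces meeting at a single interior point with chip value $-(s_0 + s_1) > 0$, which gives $d_e$. Because $C$ is anchor the partition of $d_e$ must be the trivial one (a single part), so the partition data is also recovered.

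The only potential subtlety is that the \emph{location} of the break point on $e$ is not recoverable from the slopes — but this is harmless because Definition \ref{def:cell} records only the partition on $e$, not the positions $x_i$, so moving the break point inside $e$ keeps us inside the same cell. With this, each ingredient of the cell data is determined by the outgoing slopes at vertices, and $C$ is pinned down uniquely.
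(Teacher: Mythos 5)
Your proof is correct and follows essentially the same route as the paper: unpack the cell data from Definition \ref{def:cell}, note that the $m_e$ and $d_v$ are immediate from the vertex slopes (and $D$), and use Lemma \ref{lem:2piece} to see that on each edge either $f$ is linear ($d_e=0$) or has exactly two pieces meeting at one interior point carrying $-(s_0+s_1)>0$ chips, so the (necessarily trivial) partition is determined. Your closing remark that the break point's location is irrelevant because the cell records only the partition, not positions, is a correct and slightly more explicit treatment of a point the paper leaves implicit.
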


\begin{proof}
	It suffices to show that given all those slopes, the data of $C$ are uniquely determined. Firstly, the $m_e$ are determined given those slopes. Secondly we show that the $d_e$ are also determined. If $C$ is an anchor cell and $D+(f)$ is a representative of $C$, then $D+(f)$ is an anchor divisor. By Lemma \ref{lem:2piece}, $f$ has at most two linear pieces on each edge $e$ of $\Gamma$. If $f$ is linear on $e$, then the integer $d_e$ is zero for $D+(f)$; otherwise $f$ has two linear pieces on $e$. Suppose they have a common point $v$. Then $(D+(f))(v)$ is minus the sum of the two outgoing slopes at the endpoints of $e$, so $d_e$ is still determined by those slopes. Finally for each vertex $v$ of $\Gamma$, $d_v=(D+(f))(v)$. $D(v)$ is known, and each $(f)(v)$, which is the sum of the outgoing slopes at v, is also determined, so $d_v$ is determined too.
\end{proof}

\begin{lemma}(\cite[Lemma 7]{HMY})\label{lem:bd}
	Let $D$ be an effective divisor on a metric graph $\Gamma$ and $f\in R(D)$. Then the slopes of all linear pieces of $f$ are between $-\deg(D)$ and $\deg(D)$.
\end{lemma}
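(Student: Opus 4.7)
My plan is to bound every positive slope $s$ appearing on a linear piece of $f$ by $\deg(D)$; since a linear piece has opposite slopes in its two orientations, this one-sided bound is equivalent to the symmetric conclusion of the lemma. So I fix a linear piece with slope $s > 0$, orienting the edge so that $f$ is increasing on the piece.

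The approach is to localize to a superlevel set and exploit the degree-zero property of principal divisors there. Pick a generic real number $t$ strictly between the two $f$-values at the endpoints of the chosen piece, chosen to avoid the $f$-values at the finitely many vertices of $\Gamma$ and break points of $f$. Let $S := f^{-1}([t,\infty))$. Then $S$ is a closed subset of $\Gamma$, and genericity of $t$ forces the boundary $B := \partial S$ to be a finite set of points, each sitting in the interior of some linear piece of $f$ and each with an outgoing slope $\sigma_b > 0$ in the direction that enters $S$. In particular, the distinguished point $p_t \in B$ on the chosen piece has $\sigma_{p_t} = s$.

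Next I would regard $S$ as a metric graph in its own right by adjoining a new degree-one vertex at each $b \in B$. Then $f|_S$ is a tropical rational function on this metric graph: continuous, piecewise linear with integer slopes. At any interior point $x \in S$, all half-edges of $\Gamma$ at $x$ remain in $S$ locally (since $f(x) > t$), which gives $\ord_x(f|_S) = \ord_x(f)$; at each $b \in B$ only the half-edge pointing into $S$ survives, so $\ord_b(f|_S) = \sigma_b$. The degree-zero identity $\sum_{x \in S} \ord_x(f|_S) = 0$ then rearranges to
$$\sum_{b \in B} \sigma_b \;=\; -\sum_{x \in S \setminus B} \ord_x(f).$$
Since $D + (f)$ is effective, $-\ord_x(f) \leq D(x)$ for every $x \in \Gamma$, so
$$s \;\leq\; \sum_{b \in B} \sigma_b \;\leq\; \sum_{x \in S} D(x) \;\leq\; \deg(D),$$
where the first inequality uses that $s = \sigma_{p_t}$ is one of the non-negative summands.

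The main obstacle is careful bookkeeping around the boundary: justifying that $S$ genuinely inherits a metric graph structure and that the order identifications hold so that the principal divisor identity applies cleanly. Once $t$ is chosen to avoid the finitely many critical $f$-values at vertices of $\Gamma$ and break points of $f$, each $b \in B$ lies in the interior of a linear piece with a single well-defined inward slope, and the rest is the short telescoping computation displayed above.
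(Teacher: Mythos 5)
Your proof is correct: genericity of $t$ makes $S=f^{-1}([t,\infty))$ a compact subgraph whose finitely many boundary points are one-valent with inward slopes $\sigma_b>0$, the degree-zero identity for $f|_S$ gives $\sum_{b\in B}\sigma_b=-\sum_{x\in S\setminus B}\ord_x(f)\le\sum_{x\in S}D(x)\le\deg(D)$ by effectivity of $D+(f)$ and of $D$, and $s=\sigma_{p_t}$ is one of the nonnegative summands on the left. The paper itself states this lemma as a citation of \cite[Lemma 7]{HMY} without reproducing a proof, and your superlevel-set (cut) argument is essentially the same standard argument given in that reference.
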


By Corollary \ref{cor:anc} and Lemma \ref{lem:bd}, we can find all anchor cells by considering the $2|E|$-tuple of outgoing slopes of all $f\in R(D)$ with at most two linear piece(s) on each edge of $\Gamma$. In particular we have a proof of the finiteness of anchor cells.

Next we implement this approach using linear programming algorithms.  

Suppose $\Gamma=(V,E)$, where $V=\{v_1,\ldots,v_n\}$ is the set of vertices and $E=\{e_1,\ldots, e_m\}$ is the set of edges. For $1\le j\le m$, the edge $e_j$ has endpoints $v_{j(1)}$ and $v_{j(2)}$, where $1\le j(1),j(2)\le n$, and the length of $e_i$ is $M_i$. For $1\le i\le n$ let $d_i=D(v_i)$ and $d=\deg(D)=\sum_{i=1}^{n}{d_i}$.

\begin{lemma}\label{lem:ancfun}
	Let $f$ be a rational function defined on $\Gamma=(V,E)$ such that there are at most two linear pieces on each edge of $\Gamma$. Denote by $a_i=f(v_i)$ for $1\le i\le n$ and by $s_{j,1},s_{j,2}$ the outgoing slope at $v_{j(1)}$ and $v_{j(2)}$ of $e_j$ for $1\le j\le m$. Then $f\in R(D)$ if and only if
	\begin{itemize}
		\item for each vertex $v_i$ we have the equations
		\begin{equation}\label{eq:ver}
		d_i+\sum_{j(1)=i}{s_{j,1}}+\sum_{j(2)=i}{s_{j,2}}\ge 0;
		\end{equation}
		\item for $1\le j\le m$, either
		\begin{equation}\label{eq:1pc}
		s_{j,1}+s_{j,2}=0, a_{j(1)}-a_{j(2)}+s_{j,1}\cdot M_j=0	
		\end{equation}
		or
		\begin{equation}\label{eq:2pc}
		s_{j,1}+s_{j,2}<0, s_{j,2}\cdot M_j<a_{j(1)}-a_{j(2)}<-s_{j,1}\cdot M_j.
		\end{equation}
	\end{itemize}
\end{lemma}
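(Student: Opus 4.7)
The plan is to interpret $f\in R(D)$ pointwise as the effectiveness of $D+(f)$ at every point of $\Gamma$, and to translate this condition into the per-vertex equations (\ref{eq:ver}) and the per-edge alternatives (\ref{eq:1pc})/(\ref{eq:2pc}). Since $D$ is vertex-supported, effectiveness splits cleanly into two independent families of constraints: one at each vertex $v_i$, and one at each interior point of each edge $e_j$.

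For the vertices, the key observation is that $\ord_{v_i}(f)$ is the sum of outgoing slopes of $f$ at $v_i$ along all incident edge directions, and by the hypothesis on the number of linear pieces these outgoing slopes are exactly the $s_{j,1}$ for edges with $j(1)=i$ together with the $s_{j,2}$ for edges with $j(2)=i$. Hence $(D+(f))(v_i)\ge 0$ is precisely (\ref{eq:ver}). For each edge $e_j$, parameterized as $[0,M_j]$ with $v_{j(1)}$ at $0$, there are two subcases. If $f$ is linear on $e_j$, then the outgoing slopes at its two endpoints are negatives of each other, so $s_{j,1}+s_{j,2}=0$, while continuity of $f$ forces $a_{j(2)}=a_{j(1)}+s_{j,1}M_j$, which together give (\ref{eq:1pc}); there are no interior contributions to $(f)$, so interior effectiveness is automatic. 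If instead $f$ has a genuine corner at some interior point $x$, a short sign computation yields $\ord_x(f)=-(s_{j,1}+s_{j,2})$, and effectiveness at $x$ (together with $\ord_x(f)\ne 0$ at a genuine corner and $D(x)=0$) forces $s_{j,1}+s_{j,2}<0$. Equating the two linear pieces at parameter $t_0\in(0,M_j)$ yields
\[
t_0=\frac{a_{j(2)}-a_{j(1)}+s_{j,2}M_j}{s_{j,1}+s_{j,2}},
\]
and the constraint $0<t_0<M_j$, combined with the negativity of the denominator, is equivalent to (\ref{eq:2pc}).

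The converse direction is the same case-by-case analysis read in reverse: given the stated conditions, effectiveness at vertices is exactly (\ref{eq:ver}); at interior points of a one-piece edge it is automatic since both $D$ and $(f)$ vanish there; and at the unique corner of a two-piece edge, (\ref{eq:2pc}) places the corner in the open interval $(0,M_j)$ while the negativity of $s_{j,1}+s_{j,2}$ gives $\ord_x(f)>0$. The main obstacle, such as it is, amounts to careful sign and orientation bookkeeping — outgoing slopes at $v_{j(1)}$ and $v_{j(2)}$ correspond to opposite orientations of $e_j$, and because $s_{j,1}+s_{j,2}$ is negative in case (\ref{eq:2pc}), dividing through flips the inequalities, so one has to take care to derive the precise chain $s_{j,2}M_j<a_{j(1)}-a_{j(2)}<-s_{j,1}M_j$ from $0<t_0<M_j$.
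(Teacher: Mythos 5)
Your proof is correct and follows essentially the same route as the paper's: split effectiveness of $D+(f)$ into vertex and edge-interior conditions, case on one versus two linear pieces per edge, solve for the corner location $t_0=\frac{a_{j(2)}-a_{j(1)}+s_{j,2}M_j}{s_{j,1}+s_{j,2}}$, and translate $0<t_0<M_j$ (with the negative denominator flipping the inequalities) into the chain in (\ref{eq:2pc}). The sign bookkeeping you flag ($\ord_x(f)=-(s_{j,1}+s_{j,2})$ at an interior corner, and the vertex order being the sum of the relevant $s_{j,1},s_{j,2}$) matches the paper's computation exactly.
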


\begin{proof}
	Suppose $f\in R(D)$. Note that for each $1\le i\le n$ we have $(f)(v_i)=\sum_{j(1)=i}{s_{j,1}}+\sum_{j(2)=i}{s_{j,2}}$, then
	\[d_i+\sum_{j(1)=i}{s_{j,1}}+\sum_{j(2)=i}{s_{j,2}}=(D+(f))(v_i)\ge 0.\]
	For each $1\le j\le m$, $f$ has either one or two linear piece(s) on the edge $e_j$. If there is one linear piece, then $s_{j,1}+s_{j,2}=0$, and by definition of slope we have $a_{j(2)}-a_{j(1)}=s_{j,1}\cdot M_j$; if there are two linear pieces, then there exists an interior point $p$ of $e_j$ such that $f$ is linear on both line segments $v_{j(1)}p$ and $v_{j(2)}p$, and $s_{j,1}+s_{j,2}\ne 0$. Note that $(D+(f))(p)=(f)(p)=-s_{j,1}-s_{j,2}$, so $s_{j,1}+s_{j,2}<0$. Let $x$ be the length of the segment $v_{j(1)}p$, then the length of the segment $v_{j(2)}p$ is $M_j-x$. And by definition of slope we have
	\begin{equation}\label{eq:slo}
	a_{j(1)}-f(p)+s_{j,1}\cdot x=0, \quad a_{j(2)}-f(p)+s_{j,2}\cdot (M_j-x)=0.
	\end{equation}
	Since $f(p)$ can be any real number, we eliminate it and get
	\[(s_{j,1}+s_{j,2})x=(a_{j(2)}-a_{j(1)}+s_{j,2}M_j).\]
	Then 
	\[x=\frac{a_{j(2)}-a_{j(1)}+s_{j,2}\cdot M_j}{s_{j,1}+s_{j,2}}.\]
	Since $p$ is an anterior point of $e_j$, we have $0<x<M_j$. Then $a_{j(2)}-a_{j(1)}+s_{j,2}\cdot M_j<0$ and $a_{j(1)}-a_{j(2)}+s_{j,1}\cdot M_j<0$.
	
	Conversely, suppose for each $1\le i\le n$ (\ref{eq:ver}) holds and for each $1\le j\le m$, either (\ref{eq:1pc}) or (\ref{eq:2pc}) holds. For each $1\le j\le m$, if $s_{j,1}+s_{j,2}=0$ then by (\ref{eq:1pc}) $f$ is well-defined on $e_j$ and $D+(f)(q)\ge 0$ for all points $q$ in the interior of $e_j$; otherwise (\ref{eq:2pc}) holds, by the chain inequalities $f$ is well-defined on $e_j$ with two linear pieces. Since $s_{j,1}+s_{j,2}<0$, we have $D+(f)(q)\ge 0$ for all points $q$ in the interior of $e_j$. Finally for each $1\le i\le n$, (\ref{eq:ver}) holds. Then $(D+(f))(v_i)\ge 0$. So $f$ is a well-defined rational function on $\Gamma$ with at most two linear pieces on each edge of $\Gamma$ and $f\in R(D)$.
\end{proof}

Note that if $D+(f)$ is an anchor divisor in $|D|$, then it also has degree $d$ and its support has at most one intersection point with each open $e_i$. Thus we obtain a partition of $d$ into $n+m$ nonnegative integers: $d=\sum_{i=1}^{n}{d'_i}+\sum_{j=1}^{m}{c_j}$, where $d'_i=(D+(f))(v_i)$ and $c_j=\sum_{x\in {e_j}^{\circ}}{(D+(f))(x)}$. We call them as \emph{configurations} of anchor divisors. Note that there are $\binom{d+n+m-1}{d}$ configurations in total.

\begin{corollary}\label{cor:cfg}
	Suppose $f\in R(D)$ with the same notations as in Lemma \ref{lem:ancfun}. Let $d=\sum_{i=1}^{n}{d'_i}+\sum_{j=1}^{m}{c_j}$ be the configuration of the divisor $D+(f)$, then for $1\le j\le m$, we have
	\begin{equation}\label{eq:cfge}
		c_j=-s_{j,1}-s_{j,2}.
	\end{equation}
	And for $1\le i\le n$ we have 
	\begin{equation}\label{eq:cfgv}
		d'_i=d_i+\sum_{j(1)=i}{s_{j,1}}+\sum_{j(2)=i}{s_{j,2}}
	\end{equation}
\end{corollary}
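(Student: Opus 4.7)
The plan is to derive both identities directly from the definition of the order of a rational function, together with the anchor-divisor structure established in Lemma \ref{lem:2piece}. Since the configuration $d = \sum_{i} d'_i + \sum_j c_j$ satisfies $d'_i = (D+(f))(v_i)$ and $c_j = \sum_{x \in e_j^\circ}(D+(f))(x)$, the task is to compute these quantities in terms of the slopes $s_{j,1}, s_{j,2}$.

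For equation (\ref{eq:cfgv}), I would simply expand $d'_i = D(v_i) + \ord_{v_i}(f) = d_i + \ord_{v_i}(f)$ and then invoke the definition of $\ord_{v_i}(f)$ as the sum of outgoing slopes of $f$ at $v_i$ along all incident directions. With the notation of Lemma \ref{lem:ancfun}, the outgoing slopes at $v_i$ are exactly $s_{j,1}$ for those $j$ with $j(1) = i$ and $s_{j,2}$ for those $j$ with $j(2) = i$, which gives (\ref{eq:cfgv}) immediately.

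For equation (\ref{eq:cfge}), I would split into cases according to the dichotomy in Lemma \ref{lem:2piece}. If $f$ is linear on $e_j$, then $s_{j,1} + s_{j,2} = 0$ and no interior point of $e_j$ supports a chip of $D+(f)$, so $c_j = 0 = -s_{j,1} - s_{j,2}$. Otherwise $f$ has exactly two linear pieces on $e_j$, meeting at a unique interior point $p$, and since $D$ is vertex-supported we have $c_j = (D+(f))(p) = \ord_p(f)$. Parameterizing $e_j$ as $[0,M_j]$ with $v_{j(1)}$ at $0$ and $v_{j(2)}$ at $M_j$, the piece of $f$ on $[0,p]$ has constant slope $s_{j,1}$, so the outgoing slope at $p$ in the direction back toward $v_{j(1)}$ is $-s_{j,1}$; symmetrically the outgoing slope at $p$ toward $v_{j(2)}$ is $-s_{j,2}$. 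Summing these two outgoing slopes yields $\ord_p(f) = -s_{j,1} - s_{j,2}$, as required.

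The computation is essentially a one-line bookkeeping exercise once the case split is in place; the only subtlety I expect to watch for is the sign flip when re-expressing an outgoing slope at the vertex endpoint as an outgoing slope at the interior breakpoint $p$ viewed from the opposite direction. As long as the parameterization used here matches the one already fixed in the statement of Lemma \ref{lem:ancfun} (with $s_{j,1}$ measured outward from $v_{j(1)}$ and $s_{j,2}$ measured outward from $v_{j(2)}$), the identity (\ref{eq:cfge}) follows cleanly in both cases.
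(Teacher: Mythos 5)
Your proof is correct and follows essentially the same route as the paper, which leaves the corollary as an immediate consequence of the bookkeeping already done in the proof of Lemma \ref{lem:ancfun} (namely $(f)(v_i)=\sum_{j(1)=i}s_{j,1}+\sum_{j(2)=i}s_{j,2}$ and, at a breakpoint $p$, $(D+(f))(p)=-s_{j,1}-s_{j,2}$). Your case split and sign analysis at $p$ are exactly that computation, with only the cosmetic difference that you cite Lemma \ref{lem:2piece} for the two-piece dichotomy, which is already built into the hypotheses carried over from Lemma \ref{lem:ancfun}.
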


Now for each configuration $d=\sum_{i=1}^{n}{d'_i}+\sum_{j=1}^{m}{c_j}$, we consider the following system of linear constraints in  (\ref{eq:1pc}), (\ref{eq:2pc}), (\ref{eq:cfge}),  (\ref{eq:cfgv}). Here $d_i,d'_i,M_j,c_j$ are parameters and $a_i,s_{j,1},s_{j,2}$ are variables. Then among the solutions of this system, each $2m$-tuple of integers $s_{j,1},s_{j,2}$ gives an anchor cell of $|D|$.

\begin{remark}
	For each $j$, whether to apply (\ref{eq:1pc}) or (\ref{eq:2pc}) depends on the value of $c_j$.
\end{remark}

Lemma \ref{lem:ancfun} and Corollary \ref{cor:cfg} gives rise to the following algorithm. The input is the metric graph $\Gamma=(V,E)$, the edge lengths $M_j$ and the divisor $D$. The output $L$ is a list of the anchor cells.

\begin{algorithm}
	\caption{Computing Anchor Cells}\label{alg:anc}
	\begin{algorithmic}[1]
		\Procedure{AnchorCells}{$\Gamma=(V,E),M,D$}
		\State $L\gets \emptyset$
		\State $m\gets |E|$
		\State $n\gets |V|$
		\State $d\gets \deg(D)$
		\State $s\gets \{c=(c_1,\ldots,c_m,d'_1,d'_2,\ldots,d'_n)|c_j,d'_i\in \mathbb{N}, \sum_{j=1}^{m}{c_j}+\sum_{i=1}^{n}{d'_i}=d.\}$ 
		\For{$c\in s$}
		\State $S \gets$ the system of linear constraints in (\ref{eq:1pc}), (\ref{eq:2pc}), (\ref{eq:cfge}), (\ref{eq:cfgv}) 
		\If{$S$ does not have a solution with all $s_{j,1},s_{j,2}$ being integers} \hfill (*)
			\State next $c$
		\EndIf
			\State $a\gets $ array of $m$ entries
		    \State $b\gets $ array of $m$ entries
			\For{$j\gets 1,m$}
				\State $a[j] \gets [minimize(s_{j,1},S),maximize(s_{j,1},S)] \cap \mathbb{Z}$
				\State $b[j] \gets [minimize(s_{j,2},S),maximize(s_{j,2},S)] \cap \mathbb{Z}$
			\EndFor
			\State $T\gets $ the Cartesian product of $a[1],\ldots,a[m],b[1],\ldots,b[m]$
			\For{$v\in T$}
				\If{$S$ union $\{s_{j,1}=v_j,s_{j,2}=v_{j+m}|j=1,\ldots,m\}$ is feasible}
					 \State $L\gets L \cup \{[v,c]\}$
				\EndIf
			\EndFor
		\EndFor
		\State \textbf{return} $L$
		\EndProcedure
	\end{algorithmic}
\end{algorithm}

\begin{remark}
	(1) We use integer programming method for step (*). In {\tt Maple 2015} there is a command {\tt LPSolve} that is able to do it.\\
	  
	(2) In this algorithm we do not require the metric graph to be simple. We allow both loops and parallel edges.
\end{remark}

We can also deal with more general input. Suppose the skeleton of $\Gamma$ is fixed, but the metric may vary. In this case the input is the metric $(M_j)_{1\le j\le m}$, and the desired output is the set of anchor cells in $|D|$. Apparently we can apply the above approach once the $M_j$ are given, but since Algorithm \ref{alg:anc} needs to run the linear-programming subroutine $\binom{d+n+m-1}{d}$ times, it is not very efficient if we would like to compute for many different metrics.

Instead, we can also view the $M_j$'s as variables. However $s_{j,1}\cdot M_j$ appears in (\ref{eq:2pc}). In order to make the system of constraints linear, we have to let $s_{j,1},s_{j,2}$ become parameters. So we take the approach in the proof of Proposition \ref{thm:fin}. 
We still have the equations and inequalities (\ref{eq:1pc}), (\ref{eq:2pc}), (\ref{eq:cfge}), (\ref{eq:cfgv}), but $s_{j,1},s_{j,2}$ are parameters instead. Now the parameters are $d_i,s_{j,1},s_{j,2}$ and the variables are $M_j,d'_i,a_i,c_j$. Then we also need
\begin{equation}\label{ein:eff}
    c_j\ge 0 \quad \forall 1\le j\le m \text{ and } d'_i\ge 0 \quad \forall 1\le i\le n
\end{equation}
for $D+(f)$ being effective and
\begin{equation}\label{ein:pos}
    M_j>0 \quad \forall 1\le j\le m.
\end{equation}
for the edge lengths in $M$ are all positive.

By Lemma \ref{lem:bd}, we have $|s_{j,1}|,|s_{j,2}|\le d$. So there are finitely many possible values of them. Now we have an empty list first and for each choice of $(s_{j,k})$, we check the feasibility of the system  of linear constraints formed by (\ref{eq:1pc}), (\ref{eq:2pc}), (\ref{eq:cfge}), (\ref{eq:cfgv}), (\ref{ein:eff}), (\ref{ein:pos}). If it is feasible, then we find one anchor cell and save it to our list. The output is the set of anchor cells represented by divisors $L$ such that there exists some metric $M=(M_j)_{1\le j\le m}$ with $L\in |D|_{M}$. 

The advantage of this approach is that we can compute this list of all possible anchor cells in $|D|$ beforehand. Then given a specific metric, we just plug in the values of $M_j$ and check the feasibility of each anchor cell in the list.

\section{Chip-firing and extremal generators of $R(D)$}\label{sec:R(D)}

In this section we present the properties of $R(D)$. If $f$ and $g$ are rational functions defined on $\Gamma$, then $f\oplus g$ is the rational function on $\Gamma$ with $(f\oplus g)(x)=\max(f(x),g(x))$ and $f\odot g$ is the rational function on $\Gamma$ with $(f\odot g)(x)=f(x)+g(x)$. In other words, $\oplus$ and $\odot$ are the \emph{tropical} operations in the set of rational functions on $\Gamma$. Here we choose the max-plus algebra.

\begin{lemma}(\cite[Lemma 4]{HMY})
	Let $D$ be any divisor on a metric graph $\Gamma$. The space $R(D)$ is a tropical semi-module, i.e. it is closed under tropical addition and tropical scalar multiplication.
\end{lemma}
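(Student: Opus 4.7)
The plan is to verify the two closure axioms separately: closure under tropical scalar multiplication (adding a real constant) and closure under tropical addition (pointwise maximum).

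For tropical scalar multiplication, if $f \in R(D)$ and $c \in \R$, then $c \odot f = f + c$ has the same slopes as $f$ on every linear piece, so the principal divisor is unchanged: $(c \odot f) = (f)$. Thus $D + (c \odot f) = D + (f)$ is effective and $c \odot f \in R(D)$. This part is immediate.

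The substantive step is tropical addition. Suppose $f, g \in R(D)$ and set $h = f \oplus g$. First I would check that $h$ is a valid tropical rational function on $\Gamma$: continuity is inherited from $f$ and $g$; the piecewise-linear structure with integer slopes and finitely many pieces follows by subdividing each edge at the (finitely many) breakpoints of $f$, of $g$, and the crossing points where $f - g$ changes sign, then observing that $h$ coincides with one of $f, g$ on each resulting subinterval and so has an integer slope there.

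The main step is to verify that $D + (h)$ is effective, i.e.\ $D(x) + \ord_x(h) \geq 0$ for every $x \in \Gamma$. Fix $x$ and write $\sigma_{\vec v}(\cdot)$ for the outgoing slope in direction $\vec v$ at $x$. If $f(x) \neq g(x)$, say $f(x) > g(x)$, then by continuity $h \equiv f$ in a neighborhood of $x$, so $\ord_x(h) = \ord_x(f)$ and effectiveness at $x$ follows from $f \in R(D)$. The interesting case is $f(x) = g(x)$: along each outgoing direction $\vec v$ the functions $f$ and $g$ are locally affine with the same value at $x$, so $\max(f,g)$ is locally affine in that direction with slope $\max\bigl(\sigma_{\vec v}(f), \sigma_{\vec v}(g)\bigr)$. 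Summing over the finitely many outgoing directions,
\[
\ord_x(h) \;=\; \sum_{\vec v} \max\bigl(\sigma_{\vec v}(f),\sigma_{\vec v}(g)\bigr) \;\geq\; \max\bigl(\ord_x(f), \ord_x(g)\bigr),
\]
so $D(x) + \ord_x(h) \geq \max\bigl(D(x) + \ord_x(f),\, D(x) + \ord_x(g)\bigr) \geq 0$.

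The only real obstacle is the bookkeeping at points where $f$ and $g$ meet: one must correctly identify the direction-wise slopes of $h$ there, which rests on the local observation that for two affine germs agreeing at $x$, the larger one in direction $\vec v$ is precisely the one with the larger outgoing slope in direction $\vec v$. Once this local slope description is in hand, the elementary inequality $\sum_{\vec v} \max(a_{\vec v}, b_{\vec v}) \geq \max\bigl(\sum_{\vec v} a_{\vec v},\, \sum_{\vec v} b_{\vec v}\bigr)$ closes the argument.
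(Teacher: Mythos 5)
Your proof is correct: the scalar-multiplication part is immediate, and the tropical-addition part correctly identifies the only delicate point, namely that at a point where $f(x)=g(x)$ the outgoing slope of $\max(f,g)$ in each direction is the maximum of the two outgoing slopes, after which $\sum_{\vec v}\max(a_{\vec v},b_{\vec v})\ge\max\bigl(\sum_{\vec v}a_{\vec v},\sum_{\vec v}b_{\vec v}\bigr)$ gives effectiveness. Note that the paper does not prove this lemma at all --- it is quoted from \cite[Lemma 4]{HMY} --- and your argument is essentially the standard local one used there, so there is nothing further to reconcile.
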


We would like to find a minimal set of generators of the tropical semi-module $R(D)$. We use the notion of \emph{chip-firing} \cite{BLS}.

For an effective divisor $D$ on $\Gamma$ we regard it as a distribution of $\deg(D)$ chips on $\Gamma$: there are $D(x)$ chips at each point $x\in \Gamma$. Suppose that $D$ is effective and $f\in R(D)$. For each linear piece $\overline{PQ}$ of $\Gamma$, if $f$ has slope $s\in \Z_{-}$ from $P$ to $Q$, then we say that $P$ fires $s$ chips to $Q$ when adding $(f)$ to $D$. 

For a metric graph $\Gamma$, a \emph{subgraph} is a compact subset with a finite number of components. Fix an effective divisor $D$ on $\Gamma$. We say that a subgraph $\Gamma'$ of $\Gamma$ can \emph{fire} for $D$ if, for each boundary point $x$ of $\Gamma' \cap \overline{\Gamma-\Gamma'}$, the number of edges pointing out of $\Gamma'$ is no greater than $D(x)$. 

A function $f\in R(D)$ is called \emph{extremal} if for any $g_1,g_2 \in R(D)$, the decomposition $f=g_{1}\oplus g_{2}$ implies that $f=g_1 \text{ or }f=g_2$. Any generating set of $R(D)$ must contain all extremal generators up to tropical scalar multiplication.

\begin{lemma}(\cite[Theorem 14(a)]{HMY})\label{lem:ext}
	If $D$ is a vertex-supported divisor on a metric graph $\Gamma$ and $f\in R(D)$ is extremal, then $D+(f)$ is a vertex of the cell complex $|D|$. 
\end{lemma}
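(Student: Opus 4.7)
The plan is to prove the contrapositive: assuming $L := D+(f)$ lies in a cell $C$ of $|D|$ with $\dim C \ge 1$, I will construct a nontrivial tropical decomposition $f = g_1\oplus g_2$ in $R(D)$ with $g_1,g_2\ne f$, which contradicts the extremality of $f$.

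By Proposition \ref{prop:dim}, the assumption $\dim C\ge 1$ means the graph $\Gamma - I_L$ has at least two connected components, where $I_L=\{x\in\Gamma-V : L(x)>0\}$. The first step is to pick any one such component $\Gamma_1$ and set $\Gamma':=\overline{\Gamma_1}$. Every boundary point of $\Gamma'$ is an interior point of an edge of $\Gamma$ and lies in $I_L$, so $L$ carries at least one chip there.

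The heart of the construction is two non-constant tropical functions whose pointwise maximum is zero and whose principal divisors perturb $L$ only slightly. For a sufficiently small $\epsilon>0$, I set
\[\phi_1(y) := -\min\bigl(\epsilon,\,d(y,\Gamma')\bigr),\qquad \phi_2(y) := -\min\bigl(\epsilon,\,d(y,\overline{\Gamma-\Gamma'})\bigr),\]
where $d$ is the path distance in $\Gamma$. Each $\phi_i$ is continuous and piecewise linear with slopes in $\{0,\pm 1\}$, and neither is identically zero because $\Gamma'$ and $\overline{\Gamma-\Gamma'}$ are both proper subsets of $\Gamma$. Moreover $\max(\phi_1,\phi_2)\equiv 0$: every $y\in\Gamma$ lies in one of $\Gamma'$ or $\overline{\Gamma-\Gamma'}$, so at least one of $\phi_1(y),\phi_2(y)$ equals $0$ while both are nonpositive.

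The final task is to verify that $f+\phi_i\in R(D)$, i.e.\ that $L+(\phi_i)$ is effective. A direct slope computation shows that $(\phi_i)$ removes one chip from each boundary point of the corresponding subgraph and adds one chip at the point that sits $\epsilon$ away along the ``other side'' of the cut edge. Since $L\ge 1$ at each such boundary point, the subtraction is absorbed, and for $\epsilon$ chosen smaller than every distance from a boundary chip to the nearest vertex or other chip, the addition lands safely in an edge interior. Setting $g_i := f+\phi_i$ then yields
\[g_1\oplus g_2 \;=\; f+\max(\phi_1,\phi_2) \;=\; f,\]
with $g_i\ne f$ since $\phi_i\not\equiv 0$, the desired contradiction. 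I expect the main obstacle to be the slope bookkeeping for $\phi_i$ together with the uniform choice of $\epsilon$ (depending on the metric of $\Gamma$ and on $I_L$); everything else is essentially formal.
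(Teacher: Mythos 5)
Your argument is correct. Note first that the paper does not prove this statement at all: it is imported verbatim as \cite[Theorem 14(a)]{HMY}, so there is no in-paper proof to compare against. Your contrapositive runs as follows: if $L=D+(f)$ lies in a cell of dimension at least one, Proposition \ref{prop:dim} gives at least two components of $\Gamma-I_L$; taking $\Gamma'=\overline{\Gamma_1}$ for one component $\Gamma_1$, every boundary point of $\Gamma'$ (and likewise of $\overline{\Gamma-\Gamma'}$) is a non-vertex point of $I_L$ carrying a chip of $L$, and your functions $\phi_1,\phi_2$ are exactly the chip-firing moves of these two subgraphs by a small $\epsilon$. For $\epsilon$ smaller than all distances from boundary chips to the other points of $I_L\cup V$ along their edges, each $(\phi_i)$ subtracts one chip at each boundary point and deposits one chip $\epsilon$ into the interior of the opposite side, so $L+(\phi_i)\ge 0$, and $f=(f+\phi_1)\oplus(f+\phi_2)$ with both summands different from $f$ since $\Gamma'$ and $\overline{\Gamma-\Gamma'}$ are proper; this contradicts extremality. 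This is essentially the standard route, and it is the same mechanism as the criterion quoted in Lemma \ref{lem:fire}: your two sets are precisely two proper subgraphs covering $\Gamma$ that can both fire for $L$, and rather than citing that criterion you re-derive its easy direction explicitly via the firing functions. What your version buys is self-containedness (only Proposition \ref{prop:dim} and the semimodule structure are used); what the citation-based route buys is brevity, since Lemma \ref{lem:fire} already packages the $\epsilon$-bookkeeping. Two small points worth tightening if this were written up: properness of $\overline{\Gamma-\Gamma'}$ should be justified by noting that $\Gamma_1$ is open and nonempty (not merely that $\Gamma'$ is proper), and the applicability of Proposition \ref{prop:dim} uses the paper's standing reduction to $D$ effective and vertex-supported.
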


By Lemma \ref{lem:ext} in order to find the extremal generators of $R(D)$, it suffices to search among the vertices of $|D|$. The next lemma is an important criterion for the extremal generators of $R(D)$.

\begin{lemma}(\cite[Lemma 5]{HMY})\label{lem:fire}
	Let $D$ be any divisor on a metric graph $\Gamma$. Then $f\in R(D)$ is extremal if and only if there do not exist two proper subgraphs $\Gamma_1$ and $\Gamma_2$ of $\Gamma$ such that they cover $\Gamma$ and both can fire on $D+(f)$.
\end{lemma}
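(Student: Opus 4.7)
The plan is to prove both directions by translating between tropical decompositions $f = g_1 \oplus g_2$ and pairs of firing subgraphs of $\Gamma$. In each direction the bridge is an auxiliary continuous piecewise-linear function: the difference $u_i = f - g_i$ going one way, and a ``bump'' function supported near the boundary of $\Gamma_i$ going the other way.

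For the direction ``if $f$ is not extremal then such subgraphs exist'', I would start from a decomposition $f = g_1 \oplus g_2$ with $g_i \in R(D)$ and $g_i \ne f$, and define $\Gamma_i := \{x \in \Gamma : g_i(x) = f(x)\}$. Continuity makes each $\Gamma_i$ a closed subgraph, $\Gamma_1 \cup \Gamma_2 = \Gamma$ because $f = \max(g_1,g_2)$, and each $\Gamma_i$ is proper because $g_i \ne f$. To verify the firing condition I would analyze $u_i = f - g_i$: it is non-negative, piecewise-linear with integer slopes, and vanishes exactly on $\Gamma_i$. At a boundary point $x$ of $\Gamma_i$, the outgoing slopes of $u_i$ along directions remaining in $\Gamma_i$ are zero, while those along directions leaving $\Gamma_i$ are strictly positive integers, hence at least $1$. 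Summing gives $\ord_x(u_i) \ge k_x$, where $k_x$ is the number of edges of $\Gamma$ leaving $\Gamma_i$ at $x$. On the other hand, linearity of the principal-divisor map yields $\ord_x(u_i) = (D+(f))(x) - (D+(g_i))(x) \le (D+(f))(x)$, using effectivity of $D+(g_i)$. Combining, $k_x \le (D+(f))(x)$, which is exactly the firing condition for $\Gamma_i$ on $D+(f)$.

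For the converse, I would manufacture a nontrivial decomposition from the firing data. Fix $\delta > 0$ smaller than every edge length and smaller than the distance from any boundary point of $\Gamma_1$ or $\Gamma_2$ to any other vertex or supporting point of $D+(f)$. For each $i$, define $h_i$ to be the continuous piecewise-linear function that is $0$ on $\Gamma_i$, is $-\delta$ outside a $\delta$-neighborhood of $\Gamma_i$, and interpolates with outgoing slope $-1$ along the initial segment of length $\delta$ on each edge leaving $\Gamma_i$. Then $h_i$ has integer slopes and $g_i := f + h_i$ is a rational function with $g_i \le f$, equality on $\Gamma_i$ and strict inequality off it. The principal divisor $(h_i)$ takes value $-k_x$ at each boundary point $x$ of $\Gamma_i$, value $+1$ at each transition endpoint, and zero elsewhere, so effectivity of $D+(g_i)$ reduces at boundary points to the firing hypothesis $k_x \le (D+(f))(x)$ and is automatic everywhere else. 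Since $\Gamma_1 \cup \Gamma_2 = \Gamma$, every point lies in some $\Gamma_i$ where $g_i = f$, so $g_1 \oplus g_2 = f$; and each $g_i \ne f$ because $\Gamma_i$ is proper. This contradicts extremality of $f$.

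The main obstacle is the converse construction: producing $h_i$ whose principal divisor is supported exactly at the boundary of $\Gamma_i$ with the prescribed negative multiplicities, balanced by harmless positive contributions at interior transition points. The delicate issue is the choice of $\delta$, which must be small enough that the interpolating segments lie in edge interiors away from vertices, disjoint from the support of $D+(f)$, and non-interacting between $h_1$ and $h_2$. Once that is arranged, the chip-firing inequality is precisely what makes $D+(g_i)$ effective at the boundary points, and no additional constraint arises.
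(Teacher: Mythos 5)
Your argument is correct, and it is essentially the standard proof of \cite[Lemma 5]{HMY}, which this paper cites without reproving: one direction takes $\Gamma_i=\{x: g_i(x)=f(x)\}$ and bounds $\ord_x(f-g_i)$ from below by the number of out-pointing edges and from above by $(D+(f))(x)$, the other fires each $\Gamma_i$ by a small $\varepsilon$ (your $h_i=-\min(\delta,\mathrm{dist}(\cdot,\Gamma_i))$) to produce the decomposition $f=g_1\oplus g_2$. The only imprecision is your description of $(h_i)$: where two ``wavefronts'' meet inside a short complementary gap, $(h_i)$ also picks up points of positive order, but since all such extra contributions are nonnegative (your choice of $\delta$ keeps vertices and $\mathrm{supp}(D+(f))$ out of the transition zone), effectivity of $D+(g_i)$ is unaffected and the proof stands.
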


\begin{remark}
	For a non-extremal $f\in R(D)$, the proper subgraphs $\Gamma_1$ and $\Gamma_2$ may not be obvious. See the example in Figure \ref{fig:div}, where $\Gamma$ is the metric graph with skeleton $K_{3,3}$ and all-equal metric, $D=K$ is the canonical divisor on $\Gamma$.
\end{remark}

\begin{figure}[h]
	\color{black}
	\centering
	\begin{minipage}[t]{0.1\textwidth}
		\centering
		\begin{tikzpicture}[scale=0.3]
		\draw (0,0) -- (0,6) -- (3,0) --(6,6) -- (6,0) -- (3,6) -- (0,0) --(6,6);
		\draw (3,6) -- (3,0);
		\draw (0,6) -- (6,0);
		\filldraw [black] (0,3) circle (3pt);
		\filldraw [black] (3,0) circle (3pt);
		\filldraw [black] (3,6) circle (3pt);
		\filldraw [black] (6,3) circle (3pt);
		\node [left] at (0,3) {2};
		\node [right] at (6,3) {2};
		\node [below] at (3,0) {1};
		\node [above] at (3,6) {1};			
		\end{tikzpicture}
	\end{minipage}
	\hspace{2cm}
	\begin{minipage}[t]{0.1\textwidth}
		\centering
		\begin{tikzpicture}[scale=0.3]
		\draw [red] (0,3) -- (0,6) -- (3,0) --(3,6) -- (6,0) -- (6,3);
		\draw [red] (0,6) -- (6,0);
		\draw [blue,thick] (0,3) -- (0,1.8);
		\draw [blue,thick] (6,3) -- (6,4.2);
		\draw [blue,thick] (3,0) -- (3.5,1);
		\draw [blue,thick] (3,6) -- (2.5,5);
		\draw [black] (0,1.8) -- (0,0) -- (2.5,5);
		\draw [black] (6,4.2) -- (6,6) -- (3.5,1);
		\draw [black] (0,0) -- (6,6);				
		\filldraw [black] (0,3) circle (3pt);
		\filldraw [black] (3,0) circle (3pt);
		\filldraw [black] (3,6) circle (3pt);
		\filldraw [black] (6,3) circle (3pt);
		\node [left]  at (0,3) {2};
		\node [right] at (6,3) {2};
		\node [below] at (3,0) {1};
		\node [above] at (3,6) {1};			
		\end{tikzpicture}
	\end{minipage}
	\hspace{2cm}
	\begin{minipage}[t]{0.1\textwidth}
		\centering
		\begin{tikzpicture}[scale=0.3]
		\draw [red] (0,3) -- (0,0) -- (3,6) --(3,0) -- (6,6) -- (6,3);
		\draw [red] (0,0) -- (6,6);
		\draw [blue,thick] (0,3) -- (0,4.2);
		\draw [blue,thick] (6,3) -- (6,1.8);
		\draw [blue,thick] (3,0) -- (2.5,1);
		\draw [blue,thick] (3,6) -- (3.5,5);
		\draw [black] (0,4.2) -- (0,6) -- (2.5,1);
		\draw [black] (6,1.8) -- (6,0) -- (3.5,5);
		\draw [black] (0,6) -- (6,0);		
		\filldraw [black] (0,3) circle (3pt);
		\filldraw [black] (3,0) circle (3pt);
		\filldraw [black] (3,6) circle (3pt);
		\filldraw [black] (6,3) circle (3pt);
		\node [left]  at (0,3) {2};
		\node [right] at (6,3) {2};
		\node [below] at (3,0) {1};
		\node [above] at (3,6) {1};			
		\end{tikzpicture}
	\end{minipage}	
	\caption{A non-extremal divisor and the two subgraphs (blue) that can fire. The corresponding rational function takes value $1$ on the red parts and $0$ on the black parts and is linear with slope $1$ from red parts to black parts.}\label{fig:div}
\end{figure}
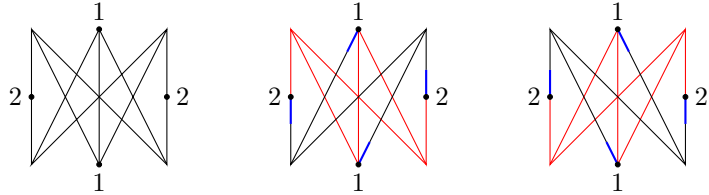

Our discussion suggests the following approach for computing all extremal generators of $R(D)$: 
\begin{enumerate}[(A)]
	\item Find all anchor divisors in $R(D)$.
	\item Filter them by Theorem \ref{prop:dim} to get all vertices of $|D|$.
	\item Filter the vertices by Lemma \ref{lem:fire} to get all extremal generators of $R(D)$.\label{enu:ext}
\end{enumerate}

We introduce a method for Step (\ref{enu:ext}). Given an effective divisor $L=D+(f)$ on $\Gamma$, we apply Lemma \ref{lem:fire} to check whether $f$ is extremal. Note that if a proper subgraph can fire for $L$, then its boundary is contained in $supp(L)$. So we partition $\Gamma$ into connected subgraphs whose boundaries are contained in $supp(L)$ and we call them components. Since $supp(L)$ is finite, so is the number of such components. We conclude that any subgraph that can fire for $L$ is a union of these components. For each such union, we ignore it if it is non-proper or cannot fire for $L$, then we have a finite list of all proper subgraphs of $\Gamma$ that can fire for $L$. Finally we check whether there is a pair in the list that covers $\Gamma$.

\section{Canonical linear systems on some trivalent graphs}\label{sec:exp}

In this section we apply our methods to some nontrivial examples. A metric graph is \emph{trivalent} if the degree of every vertex is $3$. Trivalent graphs appear in the Berkovich skeleton of many curves\cite[Example 5.29]{BPR}. This fact motivates us to compute the examples of trivalent metric graphs and the canonical divisor $K$. 

For the computations we performed below, our hardware is a laptop with Intel Core i5-6200U processor (2.3 GHz) and 8GB RAM. The software is {\tt Maple}, version 2015. All computations were single-threaded.

\subsection{The example $K_{4}$}\label{ssec:k4}

The complete graph $K_4$ has $4$ vertices and its genus is $3$. Table \ref{tab:K4} shows the structure of $R_{M,K}$ and $|K|_M$ on $K_{4}$ given a particular metric $M$. For $1\le i<j\le 4$ let $l_{ij}$ be the edge length between vertex $i$ and $j$. A metric $M$ is given by $(l_{12},l_{13},l_{14},l_{23},l_{24},l_{34})$. 

\begin{center}
	\begin{tabular}{|c|c|c|c|c|}
	    \toprule
	    \textbf{Metric} & \tabincell{c}{\textbf{Anchor} \\ \textbf{Cells}}& \tabincell{c}{\textbf{Extremal} \\ \textbf{Generators}} &  \textbf{$f$-vector} & \tabincell{c}{\textbf{Computational} \\ \textbf{Time (s)}}\\
	    \hline 
	    $(1,1,1,1,1,1)$ & $30$ & $7$ & $(14,28,15)$ & $3.1$ \\
	    \hline
	    $(1,1,2,2,1,1)$ & $42$ & $11$ & $(26,52,31,4)$ & $3.4$ \\
	    \hline
	    $(2,2,2,2,2,3)$ & $36$ & $9$ & $(20,40,23,2)$ & $3.0$ \\
	    \hline
	    $(2,2,2,2,2,1)$ & $40$ & $11$ & $(24,44,21)$ & $3.8$ \\
	    \hline
	    $(4,9,7,8,6,10)$ & $50$ & $15$ & $(34,60,27)$ & $4.1$ \\
	    \bottomrule
	\end{tabular}
	\captionof{table}{Structure of $|K|$ and $R_{K}$ for different metrics on $K_{4}$}\label{tab:K4}
\end{center}

\begin{conjecture}\label{conj:16}
	If $\Gamma$ has skeleton $K_4$ and $D=K$, then the number of anchor cells in $|D|$ minus the number of vertices in $|D|$ is always $16$.
\end{conjecture}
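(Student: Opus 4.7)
The plan is to count the anchor cells of positive dimension, since by Lemma~\ref{lem:ver} the difference $|\mathcal{A}| - |\text{vertices of }|K||$ equals precisely the number of positive-dimensional anchor cells. First I would show that every such anchor cell has dimension exactly $1$. By Proposition~\ref{prop:dim}, the dimension of an anchor cell with representative $L$ equals the number of components of $\Gamma - I_L$ minus one, which in turn equals the number of components of the graph $K_4 \setminus E^*$ where $E^* := \{e : c_e > 0\}$. Since $K_4$ is $3$-edge-connected, since $|E^*| \le \deg(K) = 4$, and since splitting $K_4$ into $\ge 3$ components would require removing $\ge 5$ edges (as a $3$-component spanning subgraph on $4$ vertices has at most one edge), every positive-dimensional anchor cell has dimension $1$ and corresponds to an $E^*$ that disconnects $K_4$ into exactly two components.

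Next I would enumerate the disconnecting edge subsets $E^* \subseteq E(K_4)$ with $|E^*| \le 4$: the $4$ star cuts of size $3$, the $3$ matching-complement cuts of size $4$, and the $12$ star-plus-one-extra-edge cuts of size $4$, for $19$ in total. For each $E^*$ one lists the chip configurations $(d_v, c_e)$ with $c_e \ge 1$ on $E^*$, $c_e = 0$ off $E^*$, and $\sum_v d_v + \sum_e c_e = 4$: this yields $7$ configurations per star cut (the extra chip goes to any of the $4$ vertices or contributes to any of the $3$ star edges) and exactly $1$ configuration per size-$4$ disconnecting set, for $43$ configurations overall. By Corollary~\ref{cor:anc}, each anchor cell with a given configuration is uniquely determined by an integer slope tuple $(m_e)$ satisfying the vertex-balance equations together with the chip-position feasibility inequalities from Lemma~\ref{lem:ancfun}.

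The main obstacle is to show that the total number of valid slope tuples, summed over all $43$ configurations, is always exactly $16$, independent of the metric $M$. A direct computation of the $(m_e)$-solution set per configuration is feasible but unilluminating, and the individual counts per configuration do depend on $M$ in general. Instead, I would pursue one of two routes. The first is a deformation/wall-crossing argument: as $M$ varies continuously, the feasibility region cut out by the inequalities of Lemma~\ref{lem:ancfun} changes, but anchor cells should be created and destroyed only in matched pairs at walls where a chip position reaches an endpoint of its edge, so the total count is a topological invariant of the combinatorial data $(K_4, K)$.

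The second, more appealing route is to look for a natural bijection with a metric-free combinatorial object of cardinality $16$. A strong hint is that $K_4$ has exactly $16 = 4^{4-2}$ spanning trees by Cayley's formula, equaling the order of the Jacobian of $K_4$; the chip-firing setup underlying Section~\ref{sec:R(D)} makes this connection natural. I would therefore try to assign to each positive-dimensional anchor cell a canonical spanning tree built from its slope pattern (for instance by taking the edges of $K_4$ where $f$ is strictly monotone in a specified direction), and conversely to build a canonical anchor cell from each spanning tree via a reduced-divisor construction analogous to the Baker--Norine bijection. Verifying that this construction is well-defined for every metric, hits each of the $43$ configurations with the right multiplicity, and always satisfies the chip-position feasibility inequalities, is the technical heart of the proof.
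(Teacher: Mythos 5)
You are attempting to prove a statement that the paper itself leaves open: Conjecture~\ref{conj:16} is stated with only the computational evidence of Table~\ref{tab:K4} behind it, so there is no proof in the paper to compare against. Your reduction is correct and genuinely useful: by Lemma~\ref{lem:ver} the difference counts the positive-dimensional anchor cells; your argument via Proposition~\ref{prop:dim} that on $K_4$ every such cell has dimension exactly $1$ (each edge of $E^*$ carries at least one chip, so $|E^*|\le\deg(K)=4$, while splitting $K_4$ into three or more components requires deleting at least five edges) is sound; and the enumeration of the $19$ disconnecting sets and $43$ chip configurations, with uniqueness of a cell given its slope tuple coming from Corollary~\ref{cor:anc}, is a reasonable combinatorial frame consistent with the paper's data.

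However, the proposal stops exactly where the proof would have to begin. The essential claim --- that the number of integer slope tuples satisfying the feasibility constraints of Lemma~\ref{lem:ancfun}, summed over the $43$ configurations, equals $16$ for every metric $M$ --- is nowhere established. Your first route merely asserts that anchor cells are ``created and destroyed only in matched pairs'' at walls; this is not a generic topological invariance one can appeal to, since the individual counts of anchor cells and of vertices each vary strongly with $M$ (Table~\ref{tab:K4}: $30/14$, $42/26$, $50/34$, \dots), and the conjectured invariance of their difference is precisely what such a pairing would amount to. To make it work you would have to identify the walls in $\R_{>0}^{6}$ (e.g.\ where a chip position degenerates to a vertex or a slope tuple loses feasibility), describe exactly which $1$-dimensional anchor cells appear and disappear there, and exhibit the cancellation --- none of which is sketched. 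Your second route, a bijection with the $16$ spanning trees of $K_4$ in the spirit of reduced or break divisors, is attractive numerology ($16=4^{2}=|\mathrm{Jac}(K_4)|$), but no construction is proposed, and making it well defined for all metrics, compatible with the slope data, and surjective with the right multiplicities is, as you acknowledge, the technical heart of the matter. As it stands, the submission is a correct reduction plus a research plan, not a proof of the conjecture.
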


\subsection{The example of $(020)$}\label{ssec:020}

Following the notation of \cite{BJMS}, we denote by $(020)$ the following trivalent graph with $4$ vertices and $6$ edges (Figure \ref{fig:020}). The metric is the vector $(a_1,a_2,b,c,d_1,d_2)$. 

\begin{figure}[H]
	\centering
	\includegraphics[scale=0.5]{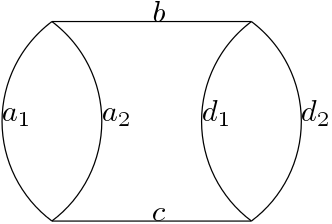}
	\caption{The trivalent graph $(020)$}\label{fig:020}
\end{figure}

Table \ref{tab:020} shows the structure of $R_{M,K}$ and $|K|_M$ on $(020)$ for some metric $M$.

\begin{center}
	\begin{tabular}{|c|c|c|c|c|}
		\toprule
		\textbf{Metric} & \tabincell{c}{\textbf{Anchor} \\ \textbf{Cells}}& \tabincell{c}{\textbf{Extremal} \\ \textbf{Generators}} &  \textbf{$f$-vector} & \tabincell{c}{\textbf{Computational} \\ \textbf{Time (s)}}\\
		\hline 
		$(1,1,1,1,1,1)$ & $20$ & $42$ & $(31,61,36,5)$ & $3.1$ \\
		\hline
		$(1,1,1,2,1,1)$ & $12$ & $44$ & $(25,47,24,1)$ & $3.5$ \\
		\hline
		$(1,3,2,2,1,3)$ & $20$ & $42$ & $(31,61,36,5)$ & $3.0$ \\
		\bottomrule
	\end{tabular}
	\captionof{table}{Structure of $|K|$ and $R_{K}$ for different metrics on $(020)$}\label{tab:020}
\end{center}

\subsection{The example $K_{3,3}$}\label{ssec:k33}

Table \ref{tab:K33} shows the structure of $R_{M,K}$ and $|K|_M$ on $K_{3,3}$ given a particular metric $M$. 
\begin{center}
	\begin{tabular}{|c|c|c|c|c|}
		\toprule
			    \textbf{Metric} & \tabincell{c}{\textbf{Anchor} \\ \textbf{Cells}}& \tabincell{c}{\textbf{Extremal} \\ \textbf{Generators}} &  \textbf{$f$-vector} & \tabincell{c}{\textbf{Computational} \\ \textbf{Time (s)}} \\
	    \hline
	    All-equal & 370 & 33 & $(130,483,630,348,81,9)$ & $171.9$ \\
		\hline
		{\small $\begin{bmatrix}
			2 & 1 & 1\\
			1 & 2 & 1\\
			1 & 1 & 2
			\end{bmatrix}$} & 460 & 63 & $(196,615,666,276,33,3)$ & $189.8$ \\	    	    		
		\hline
		{\small $\begin{bmatrix}
			3 & 91 & 96\\
			94 & 4 & 92\\
			93 & 95 & 5
			\end{bmatrix}$} & 730 & 84 & $(337,936,873,273)$ & $241.9$ \\
		\bottomrule
	\end{tabular}
	\captionof{table}{Structure of $R_{M,k}$ and $|K|$ for different metrics on $K_{3,3}$}\label{tab:K33}
\end{center}

\begin{remark}\label{rem:char}
	By \cite[Corollary 31]{HMY}, the cell complex $|D|$ is contractible as a topological space. Thus the Euler characteristic of $|D|$ is always $1$, which coincides with all the $f$-vectors we computed above.
\end{remark}

\section{Further research}\label{sec:q}

Here are some open problems for further research on this topic. 
\begin{itemize}
	\item Fixing the skeleton of $\Gamma$ and $D$, find the polyhedral cone decomposition of $\mathbb{R}_{>0}^{|E|}$ based on the cell structure of $|D|$. In particular, determine the metrics $M$ such that $|D|_{M}$ has dimension $\deg(D)-1$. 
	\item Given all the cells in $|D|$, find the face lattice of $|D|$. In general, if we apply chip-firing to a representative of a cell, we may obtain a representative of a cell on its boundary, but the rigorous argument is yet to be established.
	\item Given the skeleton of $\Gamma$ and $D$, find non-trivial upper and lower bounds of the number of anchor cells (or cells, vertices) in $|D|$ and of the number of extremal generators in $R(D)$.
\end{itemize}

\bibliographystyle{abbrv}

\nocite{MS}
\section*{Acknowledgements}
The author is grateful to Bernd Sturmfels for his guidance and encouragement throughout this project. The author thanks Madhusudan Manjunath, Ralph Morrison and Kristian Ranestad for helpful discussions and comments. The author thanks Yang An, Matt Baker and Josephine Yu for their precious suggestions. The author also thanks two anonymous referees for their critical comments on an earlier version of this work.

\bigskip

\noindent
\footnotesize {\bf Authors' address:}

\noindent Bo Lin, University of California, Berkeley, USA, 94720
{\tt linbo@math.berkeley.edu}

\end{document}